\documentclass[10pt]{article}

\setlength{\textwidth}{6.5in}
\setlength{\textheight}{8.5in}
\setlength{\footskip}{0.8in}
\setlength{\unitlength}{1mm}
\setlength{\evensidemargin}{0pt}
\setlength{\oddsidemargin}{0pt}
\setlength{\topmargin}{-0.5in}

\usepackage{enumerate,xspace}
\usepackage{amsmath,amssymb,wasysym}
\usepackage[all]{xy}
\usepackage{proof}
\usepackage[svgnames]{xcolor}
\usepackage{pict2e} 
\usepackage{tikz}
\usepackage{stmaryrd} 
\usepackage{mathtools}
\usepackage{latexsym}
\usepackage{dsfont}
\usepackage{multicol}
\usepackage{cmll}
\usepackage{multirow}
\usepackage{longtable}
\usepackage[sort,nocompress]{cite} 
\usepackage{lscape}
\usepackage{array}

\delimitershortfall-1sp

\usepackage{hyperref} 
\hypersetup{
    colorlinks,
    citecolor=red,
    filecolor=red,
    linkcolor=blue,
    urlcolor=red
}

\newtheorem{observation}{Remark}[section]
\newtheorem{lemma}[observation]{Lemma}  

\newtheorem{definition}[observation]{Definition}
\newtheorem{example}[observation]{Example}
\newtheorem{remark}[observation]{Remark}

\newtheorem{proposition}[observation]{Proposition} 
\newtheorem{corollary}[observation]{Corollary}

\makeatletter


\newdimen\w@dth

\def\setw@dth#1#2{\setbox\z@\hbox{\scriptsize $#1$}\w@dth=\wd\z@
\setbox\@ne\hbox{\scriptsize $#2$}\ifnum\w@dth<\wd\@ne \w@dth=\wd\@ne \fi
\advance\w@dth by 1.2em}

\def\t@^#1_#2{\allowbreak\def\n@one{#1}\def\n@two{#2}\mathrel
{\setw@dth{#1}{#2}
\mathop{\hbox to \w@dth{\rightarrowfill}}\limits
\ifx\n@one\empty\else ^{\box\z@}\fi
\ifx\n@two\empty\else _{\box\@ne}\fi}}
\def\t@@^#1{\@ifnextchar_ {\t@^{#1}}{\t@^{#1}_{}}}

\def\t@left^#1_#2{\def\n@one{#1}\def\n@two{#2}\mathrel{\setw@dth{#1}{#2}
\mathop{\hbox to \w@dth{\leftarrowfill}}\limits
\ifx\n@one\empty\else ^{\box\z@}\fi
\ifx\n@two\empty\else _{\box\@ne}\fi}}
\def\t@@left^#1{\@ifnextchar_ {\t@left^{#1}}{\t@left^{#1}_{}}}

\def\two@^#1_#2{\def\n@one{#1}\def\n@two{#2}\mathrel{\setw@dth{#1}{#2}
\mathop{\vcenter{\hbox to \w@dth{\rightarrowfill}\kern-1.7ex
                 \hbox to \w@dth{\rightarrowfill}}%
       }\limits
\ifx\n@one\empty\else ^{\box\z@}\fi
\ifx\n@two\empty\else _{\box\@ne}\fi}}
\def\tw@@^#1{\@ifnextchar_ {\two@^{#1}}{\two@^{#1}_{}}}

\def\tofr@^#1_#2{\def\n@one{#1}\def\n@two{#2}\mathrel{\setw@dth{#1}{#2}
\mathop{\vcenter{\hbox to \w@dth{\rightarrowfill}\kern-1.7ex
                 \hbox to \w@dth{\leftarrowfill}}%
       }\limits
\ifx\n@one\empty\else ^{\box\z@}\fi
\ifx\n@two\empty\else _{\box\@ne}\fi}}
\def\t@fr@^#1{\@ifnextchar_ {\tofr@^{#1}}{\tofr@^{#1}_{}}}


\newdimen\W@dth
\def\setW@dth#1#2{\setbox\z@\hbox{$#1$}\W@dth=\wd\z@
\setbox\@ne\hbox{$#2$}\ifnum\W@dth<\wd\@ne \W@dth=\wd\@ne \fi
\advance\W@dth by 1.2em}

\def\T@^#1_#2{\allowbreak\def\N@one{#1}\def\N@two{#2}\mathrel
{\setW@dth{#1}{#2}
\mathop{\hbox to \W@dth{\rightarrowfill}}\limits
\ifx\N@one\empty\else ^{\box\z@}\fi
\ifx\N@two\empty\else _{\box\@ne}\fi}}
\def\T@@^#1{\@ifnextchar_ {\T@^{#1}}{\T@^{#1}_{}}}

\def\T@left^#1_#2{\def\N@one{#1}\def\N@two{#2}\mathrel{\setW@dth{#1}{#2}
\mathop{\hbox to \W@dth{\leftarrowfill}}\limits
\ifx\N@one\empty\else ^{\box\z@}\fi
\ifx\N@two\empty\else _{\box\@ne}\fi}}
\def\T@@left^#1{\@ifnextchar_ {\T@left^{#1}}{\T@left^{#1}_{}}}

\def\Tofr@^#1_#2{\def\N@one{#1}\def\N@two{#2}\mathrel{\setW@dth{#1}{#2}
\mathop{\vcenter{\hbox to \W@dth{\rightarrowfill}\kern-1.7ex
                 \hbox to \W@dth{\leftarrowfill}}%
       }\limits
\ifx\N@one\empty\else ^{\box\z@}\fi
\ifx\N@two\empty\else _{\box\@ne}\fi}}
\def\T@fr@^#1{\@ifnextchar_ {\Tofr@^{#1}}{\Tofr@^{#1}_{}}}

\def\Two@^#1_#2{\def\N@one{#1}\def\N@two{#2}\mathrel{\setW@dth{#1}{#2}
\mathop{\vcenter{\hbox to \W@dth{\rightarrowfill}\kern-1.7ex
                 \hbox to \W@dth{\rightarrowfill}}%
       }\limits
\ifx\N@one\empty\else ^{\box\z@}\fi
\ifx\N@two\empty\else _{\box\@ne}\fi}}
\def\Tw@@^#1{\@ifnextchar_ {\Two@^{#1}}{\Two@^{#1}_{}}}

\def\to{\@ifnextchar^ {\t@@}{\t@@^{}}}
\def\from{\@ifnextchar^ {\t@@left}{\t@@left^{}}}
\def\tofro{\@ifnextchar^ {\t@fr@}{\t@fr@^{}}}
\def\To{\@ifnextchar^ {\T@@}{\T@@^{}}}
\def\From{\@ifnextchar^ {\T@@left}{\T@@left^{}}}
\def\Two{\@ifnextchar^ {\Tw@@}{\Tw@@^{}}}
\def\Tofro{\@ifnextchar^ {\T@fr@}{\T@fr@^{}}}

\makeatother

\title{Hopf Monads on Biproducts}
\author{Masahito Hasegawa and Jean-Simon Pacaud Lemay}

\begin{document}
\allowdisplaybreaks

\maketitle

\begin{abstract}
A Hopf monad, in the sense of Bruguières, Lack, and Virelizier, is a special kind of monad that can be defined for any monoidal category. In this note, we study Hopf monads in the case of a category with finite biproducts, seen as a symmetric monoidal category. We show that for biproducts, a Hopf monad is precisely characterized as a monad equipped with an extra natural transformation satisfying three axioms, which we call a fusion invertor. We will also consider three special cases: representable Hopf monads, idempotent Hopf monads, and when the category also has negatives. In these cases, the fusion invertor will always be of a specific form that can be defined for any monad. Thus in these cases, checking that a monad is a Hopf monad is reduced to checking one identity. 
\end{abstract}
\noindent \small \textbf{Acknowledgements.} For this research, 
the first named author was supported by
JSPS KAKENHI Grant No. JP18K11165 and JP21K11753.
The second named author was financially supported by a JSPS Postdoctoral Fellowship, Award \#: P21746. 



\section{Introduction}

Hopf monads were originally introduced by Bruguières and Virelizier in \cite{bruguieres2007hopf} for autonomous categories, and later generalized by Bruguières, Lack, and Virelizier in \cite{bruguieres2011hopf} to arbitrary monoidal categories. This latter definition of a Hopf monad requires that canonical natural transformations, called \emph{fusion operators}, are isomorphisms. The name ``Hopf monad" comes from the fact that a Hopf monad is a generalization of a Hopf algebra (whose antipode is invertible). Hopf monads have the ability to lift many desirable monoidal related structures. Indeed, for a monoidal closed category, Hopf monads are precisely the kinds of monads that lift the monoidal closed structure to their Eilenberg-Moore category \cite[Theorem 3.6]{bruguieres2011hopf}, and similarly for autonomous categories \cite[Theorem 3.10]{bruguieres2011hopf}. Furthermore, for star autonomous categories or traced monoidal categories, we can precisely characterize which Hopf monads lift star autonomous structure \cite{hasegawa2018linear} or traced monoidal structure \cite{hasegawa2022traced}. The purpose of this note is to discuss Hopf monads on a category with finite biproducts, viewed as a symmetric monoidal category. 

\begin{remark} It is important to point out and stress that the term ``Hopf monad" used in this paper and by Bruguières, Lack, and Virelizier in \cite{bruguieres2011hopf} differs from the term ``Hopf monad" used by Moerdijk in \cite{moerdijk2002monads}. Indeed, what Moerdijk calls a ``Hopf monad" in \cite{moerdijk2002monads} is what Bruguières et. al call a ``bimonad" in \cite{bruguieres2007hopf,bruguieres2011hopf}, which has also alternatively been called an ``opmonoidal monad" \cite{mccrudden2002opmonoidal} or a ``comonoidal monad" \cite{hasegawa2018linear}. So a ``Hopf monad" in the Bruguières et al. sense in \cite{bruguieres2011hopf} is a bimonad/opmonoidal monad/comonoidal monad whose canonical fusion operators are isomorphisms. Fusion operators and their inverses were not considered by Moerdijk in \cite{moerdijk2002monads}. As such, as this a follow-up on the work of Bruguières et al., we have elected to use the same terminology and definitions of ``bimonads" and ``Hopf monads" as they used in \cite{bruguieres2011hopf}. 
\end{remark}

For an arbitrary category $\mathbb{X}$, we denote a monad as a triple $(\mathsf{T}, \mu, \eta)$ where $\mathsf{T}: \mathbb{X} \to \mathbb{X}$ is the endofunctor, $\mu_A: \mathsf{T}\mathsf{T}(A) \to \mathsf{T}(A)$ is the multiplication, and $\eta_A: A \to \mathsf{T}(A)$ is the unit. For a monoidal category $\mathbb{X}$, with monoidal product $\otimes$ and monoidal unit $I$, a \textbf{bimonad} \cite[Section 2.4]{bruguieres2011hopf} is a monad $(\mathsf{T}, \mu, \eta)$ which comes equipped with a natural transformation ${\mathsf{m}_{A,B}: \mathsf{T}(A \otimes B) \to \mathsf{T}(A) \otimes \mathsf{T}(B)}$ and a map $\mathsf{m}_I: \mathsf{T}(I) \to I$ such that $\mathsf{T}$ is a comonoidal functor, and $\mu$ and $\eta$ are both comonoidal natural transformations. For any bimonad, there are two canonical natural transformations, which can always be defined, called the \textbf{fusion operators} \cite[Section 2.6]{bruguieres2011hopf}. The \textbf{left fusion operator} ${\mathsf{h}^l_{A,B}:  \mathsf{T}(A \otimes T (B)) \to \mathsf{T}(A) \otimes \mathsf{T}(B)}$ and the \textbf{right fusion operator} $\mathsf{h}^r_{A,B}:  \mathsf{T}(\mathsf{T}(A) \otimes B) \to \mathsf{T}(A) \otimes \mathsf{T}(B)$  are respectively defined as the following composites: 
\begin{align}
    \mathsf{h}^l_{A,B} &:=   \xymatrixcolsep{5pc}\xymatrix{ \mathsf{T}\left(A \otimes \mathsf{T}(B) \right) \ar[r]^-{\mathsf{m}_{A,\mathsf{T}(B)}} & \mathsf{T}(A) \otimes \mathsf{T}\mathsf{T}(B)\ar[r]^-{1_{\mathsf{T}(A)} \otimes \mu_B} & \mathsf{T}(A) \otimes \mathsf{T}(B) } \\
    \mathsf{h}^r_{A,B} &:=   \xymatrixcolsep{5pc}\xymatrix{\mathsf{T}\left(\mathsf{T}(A) \otimes B \right) \ar[r]^-{\mathsf{m}_{\mathsf{T}(A),B}} & \mathsf{T}\mathsf{T}(A) \otimes \mathsf{T}(B)\ar[r]^-{\mu_A \otimes 1_{\mathsf{T}(B)}} & \mathsf{T}(A) \otimes \mathsf{T}(B)}
\end{align}
A \textbf{Hopf monad} \cite[Section 2.7]{bruguieres2011hopf} is a bimonad whose fusion operators are natural isomorphisms, so we have that:
\[ \mathsf{T}(A \otimes \mathsf{T}(B)) \cong \mathsf{T}(A) \otimes \mathsf{T}(B) \cong \mathsf{T}(\mathsf{T}(A) \otimes B) \]
For a symmetric monoidal category, a \textbf{symmetric bimonad} \cite[Section 3, under the name ``cocommutative Hopf monad"]{moerdijk2002monads} is a bimonad that is also compatible with the natural symmetry isomorphism $\sigma_{A,B}: A \otimes B \to B \otimes A$. For a symmetric bimonad, the fusion operators can be defined from one another using the symmetry: $\mathsf{h}^r_{A,B} =  \sigma_{T(B), T(A)} \circ \mathsf{h}^l_{B,A} \circ T(\sigma_{\mathsf{T}(A), B})$. Therefore for a symmetric bimonad, the invertibility of one fusion operator implies the invertibility of the other \cite[Lemma 6.5]{hasegawa2022traced}. So for a symmetric bimonad, we may speak of \emph{the} fusion operator $\mathsf{h}_{A,B}:  \mathsf{T}(A \otimes T (B)) \to \mathsf{T}(A) \otimes \mathsf{T}(B)$, and define a \textbf{symmetric Hopf monad} \cite[Definition 6.4]{hasegawa2022traced} to be a symmetric bimonad whose fusion operator $\mathsf{h}$ is a natural isomorphism. The main examples of (symmetric) Hopf monads are those of the form $\mathsf{T}(-) = H \otimes -$ where $H$ is a (cocommutative) Hopf monoid with an invertible antipode \cite[Example 2.10]{bruguieres2011hopf}, and these Hopf monads are called \emph{representable} Hopf monads \cite[Section 5]{bruguieres2011hopf}. 

Before we discuss Hopf monads for biproducts, let us first discuss what one can say about Hopf monads for products and coproducts. Starting with products, let $\mathbb{X}$ be a category with finite products, with binary product $\times$, projections ${\pi_i: A_1 \times \hdots \times A_n \to A_i}$, pairing operator $\langle -, \hdots, - \rangle$, and terminal object $\ast$. Every monad $(\mathsf{T}, \mu, \eta)$ on a category $\mathbb{X}$ with finite products has a unique bimonad structure with respect to the Cartesian monoidal structure \cite[Example 1.6, under the name ``Hopf monad"]{moerdijk2002monads} where ${\mathsf{m}_{A,B}: \mathsf{T}(A \times B) \to \mathsf{T}(A) \times \mathsf{T}(B)}$ is defined as $\mathsf{m}_{A,B} := \left \langle \mathsf{T}(\pi_1), \mathsf{T}(\pi_2) \right \rangle$ and ${\mathsf{m}_\ast: \mathsf{T}(\ast) \to \ast}$ is defined as the unique map to the terminal object. In fact, this gives a symmetric bimonad structure. So the fusion operator $\mathsf{h}_{A,B}: \mathsf{T}(A \times T (B)) \to \mathsf{T}(A) \times \mathsf{T}(B)$ is worked out to be:
\[ \mathsf{h}_{A,B} = \left \langle \mathsf{T}(\pi_1), \mu_B \circ \mathsf{T}(\pi_2) \right \rangle \]
or in other words, using the universal property of the product, the fusion operator is the unique map which makes the following diagram commute: 
\[  \xymatrixcolsep{5pc}\xymatrix{ & \mathsf{T}\left( A \times \mathsf{T}(B) \right)  \ar@{-->}[dd]^-{\exists !~\mathsf{h}_{A,B}}  \ar[r]^-{\mathsf{T}(\pi_2)}  \ar@/_2pc/[ddl]_-{\mathsf{T}(\pi_1)}  & \mathsf{T}\mathsf{T}(B)  \ar[dd]^-{\mu_B} \\ \\
\mathsf{T}(A) & \ar[l]^-{\pi_1} \mathsf{T}(A) \times \mathsf{T}(B)  \ar[r]_-{\pi_2} & \mathsf{T}(B) 
}  \]
So for a category with finite products, we can say that a Hopf monad is a monad $(\mathsf{T}, \mu, \eta)$ such that the fusion operator $\mathsf{h}$ as defined above is a natural isomorphism. However, not much can be necessarily said about the form of the inverse of the fusion operator ${\mathsf{h}^{-1}_{A,B}: \mathsf{T}(A) \times \mathsf{T}(B) \to \mathsf{T}\left( A \times \mathsf{T}(B) \right)}$. 

On the other hand, what about the coproduct case? So now let $\mathbb{X}$ be a category with finite coproducts, with binary coproduct $\oplus$, injections $\iota_j: A_j \to A_1 \oplus \hdots \oplus A_n$, copairing operator $[-, \hdots, -]$, and initial object $\mathsf{0}$. Not every monad $(\mathsf{T}, \mu, \eta)$ on a category $\mathbb{X}$ with finite coproducts will have a bimonad structure with respect to the coCartesian monoidal structure. So we must ask that our monad $(\mathsf{T}, \mu, \eta)$ be a symmetric bimonad with structure maps ${\mathsf{m}_{A,B}: \mathsf{T}(A \oplus B) \to \mathsf{T}(A) \oplus \mathsf{T}(B)}$ and ${\mathsf{m}_\mathsf{0}: \mathsf{T}(\mathsf{0}) \to \mathsf{0}}$. In this case, not much can be said about the fusion operator $\mathsf{h}_{A,B}: \mathsf{T}(A \oplus T (B)) \to \mathsf{T}(A) \oplus \mathsf{T}(B)$. Instead, if we have a symmetric Hopf monad on a category with finite coproducts, we can say something about the inverse of the fusion operator since it is of type ${\mathsf{h}^{-1}_{A,B}: \mathsf{T}(A) \oplus \mathsf{T}(B) \to \mathsf{T}\left( A \oplus \mathsf{T}(B) \right)}$. Since $\mathsf{h}^{-1}_{A,B} \circ (\eta_A \oplus 1_{\mathsf{T}(B)}) = \eta_{A \oplus \mathsf{T}(B)}$ \cite[Lemma 4.2]{hasegawa2018linear}, by pre-composing by the injection $\iota_2$ and then using the naturality of $\eta$, it follows that: $\mathsf{h}^{-1}_{A,B} \circ \iota_2 = \mathsf{T}(\iota_2) \circ \eta_B$. In other words, by using the couniversal property of the coproduct, the inverse of the fusion operator is of the form:
\[ \mathsf{h}^{-1}_{A,B} := [ \mathsf{h}^\star_A, \mathsf{T}(\iota_2) \circ \eta_B  ] \]
for some unique map $\mathsf{h}^\star_A: \mathsf{T}(A) \to \mathsf{T}\left( A \oplus \mathsf{T}(B) \right)$, that is, $\mathsf{h}^{-1}_{A,B}$ is the unique map which makes the following diagram commute: 
\[  \xymatrixcolsep{5pc}\xymatrix{ \mathsf{T}(A)\ar[r]^-{\iota_1} \ar@/_2pc/[ddr]_-{\mathsf{h}^\star_{A,B}} &  \mathsf{T}(A) \oplus \mathsf{T}(B) \ar@{-->}[dd]_-{\exists!~ \mathsf{h}^{-1}_{A,B}} & \mathsf{T}(B) \ar[l]_-{\iota_2}   \ar[dd]^-{ \eta_{\mathsf{T}(B)}} \\ \\
 & \mathsf{T}\left( A \oplus \mathsf{T}(B) \right)  & \mathsf{T}\mathsf{T}(B)  \ar[l]^-{\mathsf{T}(\iota_2)} 
}  \]
Therefore, for coproducts, a symmetric Hopf monad can be characterized in terms of the existence of a natural transformation ${\mathsf{h}^\star_{A,B}: \mathsf{T}(A) \to \mathsf{T}\left( A \oplus \mathsf{T}(B) \right)}$ such that $\mathsf{h}_{A,B}\circ \mathsf{h}^\star_A = \iota_1$ and $[ \mathsf{h}^\star_A, \mathsf{T}(\iota_2) \circ \eta_B  ] \circ \mathsf{h}_{A,B} = 1_{\mathsf{T}\left( A \times \mathsf{T}(B) \right)}$. 

To recap, for products we have a full description of the fusion operator $\mathsf{h}$ but not the inverse $\mathsf{h}^{-1}$, while for coproducts we can't say much more on the form of the fusion operator $\mathsf{h}$ but know that the first argument of the inverse of the fusion operator $\mathsf{h}^{-1}$ must always be of a specific form. Since a biproduct is both a product and a coproduct, we can combine both observations to obtain a characterization of Hopf monads on categories with finite biproducts. Furthermore by naturality and using zero maps, it follows that $\mathsf{h}^\star_{A,B}: \mathsf{T}(A) \to \mathsf{T}\left( A \oplus \mathsf{T}(B) \right)$ is completely determined by the case where $B$  is the zero object $\mathsf{0}$. The main objective of this paper is to explain how a Hopf monad on a category with finite biproducts is precisely a monad $(\mathsf{T}, \mu, \eta)$ which comes equipped with an extra natural transformation $\mathsf{h}^\circ_A: \mathsf{T}(A) \to \mathsf{T}\left( A \oplus \mathsf{T}(\mathsf{0}) \right)$ satisfying three extra axioms, which we call a \textbf{fusion invertor}. We will also explain how in the cases of a representable Hopf monad, an idempotent Hopf monad, or in a setting with negatives, the fusion invertor is always of a specific form and how in these cases, checking that one has a Hopf monad is reduced to checking one identity. 

Lastly, before diving into this story, let us quickly discuss lifting again. A category with finite biproducts seen as a monoidal category is closed if and only if it is trivial, that is, all objects are zero objects. So using Hopf monads to lift (compact) closed structure or (star) autonomous structure in the biproduct setting is not interesting. On the other hand, it is possible to have a trace operator for biproducts. Per \cite[Corollary 7.10]{hasegawa2022traced}, for a traced coCartesian monoidal category, a Hopf monad lifts the trace operator if and only if the monad is an idempotent monad. As such, the same is true in the biproduct setting. At first glance, it may seem this clashes with another result which says that representable Hopf monads always lift trace operators \cite[Proposition 7.3]{hasegawa2022traced}. However, in a traced Cartesian monoidal category, the only Hopf monoid is the terminal object \cite[Theorem 3.7]{selinger2003order}, which means the only possible representable Hopf monad is the identity monad, which is trivially idempotent. Of course, for a category with finite biproducts, there can be non-representable idempotent Hopf monads, which in a traced setting would lift the trace operator. 

\section{Hopf Monads for Biproducts}\label{sec:hopfbiprod}

Let us reformulate the story of Hopf monads given in the introduction but this time described fully in terms of biproduct structure and its induced additive structure. For a category $\mathbb{X}$ with finite biproducts, we denote the biproduct of a family of $n$ objects as $A_1 \oplus \hdots \oplus A_n$ with injections $\iota_j: A_j \to A_1 \oplus \hdots \oplus A_n$ and projection $\pi_j: A_1 \oplus \hdots \oplus A_n \to A_j$, and denote the zero object as $\mathsf{0}$. For the induced commutative monoid enrichment, we denote the addition of parallel maps $f: A \to B$ and $g: A \to B$ as $f + g$ and the zero maps as $0: A \to B$. Lastly, recall that we denote a monad on a category $\mathbb{X}$ as a triple $(\mathsf{T}, \mu, \eta)$ where $\mathsf{T}: \mathbb{X} \to \mathbb{X}$ is the endofunctor, and $\mu_A: \mathsf{T}\mathsf{T}(A) \to \mathsf{T}(A)$ and $\eta_A: A \to \mathsf{T}(A)$ are the natural transformations.

As explained in the introduction, for a category with finite (bi)products, every monad always has a unique symmetric bimonad structure with respect to the (bi)product. Therefore, in the case of (bi)products we can define fusion operators and Hopf monads simply in terms of a monad. In the case of biproducts, the fusion operator can be defined in terms of a sum.   

\begin{definition} For a monad $(\mathsf{T}, \mu, \eta)$ on a category $\mathbb{X}$ with finite biproducts, the \textbf{fusion operator} is the natural transformation $\mathsf{h}_{A,B}: \mathsf{T}\left( A \oplus \mathsf{T}(B) \right) \to \mathsf{T}(A) \oplus \mathsf{T}(B)$ defined as follows: 
\begin{align}
\mathsf{h}_{A,B} = \iota_1 \circ \mathsf{T}(\pi_1) + \iota_2 \circ \mu_B \circ \mathsf{T}(\pi_2)
\end{align}
A \textbf{Hopf monad} on a category $\mathbb{X}$ with finite biproducts is a monad $(\mathsf{T}, \mu, \eta)$ on $\mathbb{X}$ whose fusion operator $\mathsf{h}$ is a natural isomorphism, so in particular $\mathsf{T}\left( A \oplus \mathsf{T}(B) \right) \cong \mathsf{T}(A) \oplus \mathsf{T}(B)$. 
\end{definition}

A list of identities that the fusion operator satisfies can be found in \cite[Proposition 2.6]{bruguieres2011hopf}, and a list of identities that the inverse of the fusion operator satisfies can be found in \cite[Lemma 4.2]{hasegawa2018linear}. Here are some examples of Hopf monads on biproducts. 

\begin{example}\label{ex:Gx-}\normalfont Let $\mathsf{CMON}$ be the category of commutative monoids (written additively) and monoid morphisms between them. $\mathsf{CMON}$ is a category with finite biproducts where the biproduct is given by the Cartesian product, $M \oplus N = M \times N$, with the monoid structure given pointwise, and where the zero object is the singleton $\mathsf{0} = \lbrace 0 \rbrace$. For any Abelian group $G$, $\mathsf{T}(-) := G \oplus -$ is a Hopf monad where the monad structure is given by:
\begin{align*} \mu_M: G \oplus M \oplus M \to M && \eta_M: M \to G \oplus M \\
\mu_M(g,h,m) = (g+h, m) && \eta_M(m) = (0,m) 
\end{align*}
and where the fusion operator and its inverse are: 
\begin{align*} \mathsf{h}_{M,N} : G \oplus M \oplus  G \oplus  N \to G \oplus  M \oplus  G \oplus  N && \mathsf{h}^{-1}_{M,N} : G \oplus  M \oplus  G \oplus  N \to G \oplus  M \oplus  G \oplus  N\\ 
\mathsf{h}_{M,N}(g,m,h,n) = (g, m, g+h, n) && \mathsf{h}^{-1}_{M,N}(g,m,h,n) = (g, m, h-g, n) 
\end{align*}
This is an example of a \emph{representable} Hopf monad, and can be generalized to any category with finite biproducts, which we discuss in Section \ref{sec:representable}. 
\end{example}

\begin{example}\label{ex:Z2} \normalfont Let $\mathsf{Ab}$ be the category of Abelian groups and group morphisms between them. $\mathsf{Ab}$ is a category with finite biproducts, with the same biproduct structure as $\mathsf{CMON}$. Let $\mathbb{Z}$ be the ring of integers and let $\mathbb{Z}_2$ be the ring of integers modulo $2$. Then $\mathsf{T}(G) := \mathbb{Z}_2 \otimes_\mathbb{Z} G$ (where $\otimes_\mathbb{Z}$ is the tensor product of Abelian groups/$\mathbb{Z}$-modules) is a Hopf monad where the monad structure is given by:
\begin{align*} \mu_G: \mathbb{Z}_2 \otimes_\mathbb{Z} \mathbb{Z}_2 \otimes_\mathbb{Z} G \to \mathbb{Z}_2 \otimes_\mathbb{Z} G && \eta_G: G \to \mathbb{Z}_2 \otimes_\mathbb{Z} G \\
\mu_G\left( x \otimes y \otimes g \right) = xy \otimes g && \eta_G(g) = 1 \otimes g 
\end{align*}
and where the fusion operator and its inverse are: 
\begin{align*} \mathsf{h}_{G,H} : \mathbb{Z}_2 \otimes_\mathbb{Z} \left( G \oplus \left( \mathbb{Z}_2 \otimes_\mathbb{Z} H \right) \right) \to \left( \mathbb{Z}_2 \otimes_\mathbb{Z} G \right) \oplus \left( \mathbb{Z}_2 \otimes_\mathbb{Z} H \right) && \mathsf{h}_{G,H}\left( x \otimes \left( g, y \otimes h  \right) \right) = \left( x \otimes g, xy \otimes h \right) \\ \mathsf{h}^{-1}_{M,N} :  \left( \mathbb{Z}_2 \otimes_\mathbb{Z} G \right) \oplus \left( \mathbb{Z}_2 \otimes_\mathbb{Z} H \right) \to \mathbb{Z}_2 \otimes_\mathbb{Z} \left( G \oplus \left( \mathbb{Z}_2 \otimes_\mathbb{Z} H \right) \right)  && \mathsf{h}^{-1}_{G,H}(x \otimes g, y \otimes h) = x \otimes \left( g, 0 \right) + y \otimes \left(0, y \otimes h \right) 
\end{align*}
We leave it as an exercise for the reader to check for themselves that $\mathsf{h}^{-1}_{G,H}$ is indeed well-defined and is the inverse of $\mathsf{h}_{G,H}$. The two reasons that $\mathbb{Z}_2 \otimes_\mathbb{Z} -$ is a Hopf monad follows from the fact that the biproduct $\oplus$ distributes over the tensor product $\otimes_\mathbb{Z}$, and that $\mathbb{Z}_2 \otimes_\mathbb{Z} \mathbb{Z}_2 \cong \mathbb{Z}_2$ via $x \otimes y \mapsto xy$ and $x \mapsto x \otimes x$. This latter fact actually implies that $\mathbb{Z}_2 \otimes_\mathbb{Z} -$ is an \emph{idempotent} Hopf monad, meaning in particular that $\mathsf{T}\mathsf{T}(G) \cong \mathsf{T}(G)$, which we will discuss more of in Section \ref{sec:idempotent}. It is worth noting that $\mathbb{Z}_2 \otimes_\mathbb{Z} -$ is also a (symmetric) Hopf monad with respect to the tensor product $\otimes_\mathbb{Z}$, and thus gives a (non-trivial) example of a monad which is a Hopf monad with respect to two different monoidal structures. However, we stress that while $\mathbb{Z}_2 \otimes_\mathbb{Z} -$ is a representable Hopf monad for the tensor product $\otimes_\mathbb{Z}$, it is not a representable Hopf monad for the biproduct $\oplus$ (since it is not of the form $G \oplus -$). 
\end{example}

\begin{example} \normalfont The above example generalizes to any monoidal category with finite biproducts such that the monoidal product $\otimes$ distributes over the biproduct $\oplus$. Then by taking any \emph{solid monoid} \cite[Definition 3.1]{gutierrez2015solid} $M$ with respect to the monoidal product $\otimes$ (i.e. a monoid whose multiplication is an isomorphism, so $M \otimes M \cong M$), $\mathsf{T}(-) = M \otimes -$ is a Hopf monad with respect to the biproduct $\oplus$. Other examples of solid monoids in $\mathsf{Ab}$ include $\mathbb{Z}$, the ring of rationals $\mathbb{Q}$, and the ring of $p$-adic integers for any prime $p$ (see \cite{gutierrez2015solid} for a full list of all solid monoids in $\mathsf{Ab}$). 
\end{example}

\begin{example}\normalfont \label{ex:HCxC} Here is a simple example that is neither representable nor idempotent. Consider the product category $\mathsf{CMON} \times \mathsf{CMON}$, which has a finite biproduct structure given pointwise. Then for any Abelian group $G$, $\mathsf{T}(M,N) := (G \oplus M, \mathsf{0})$ is a Hopf monad where the Hopf monad structure in the first argument is the same as in Example \ref{ex:Gx-}, while the Hopf monad structure in the second argument is trivially all zero maps. 
\end{example}

Let us now discuss what we can say about the form of the inverse of the fusion operator. For a Hopf monad on a category with finite biproducts, by the couniversal property of the coproduct, ${\mathsf{h}^{-1}_{A,B}: \mathsf{T}(A) \oplus \mathsf{T}(B) \to \mathsf{T}\left( A \oplus \mathsf{T}(B) \right)}$ is completely determined by its precomposition with the injection maps: $\mathsf{h}^{-1}_{A,B} \circ \iota_1: \mathsf{T}(A) \to \mathsf{T}\left( A \oplus \mathsf{T}(B) \right)$ and $\mathsf{h}^{-1}_{A,B} \circ \iota_2: \mathsf{T}(B) \to \mathsf{T}\left( A \oplus \mathsf{T}(B) \right)$. As explained in the introduction, for coproducts $\mathsf{h}^{-1}_{A,B} \circ \iota_2$ can always be shown to be $\mathsf{T}(\iota_1) \circ \eta_{\mathsf{T}(B)}$. Here we will provide an alternative proof using directly the biproduct structure. On the other hand, for coproducts, we cannot say much about $\mathsf{h}^{-1}_{A,B} \circ \iota_1$. For biproducts, however, we can show $\mathsf{h}^{-1}_{A,B} \circ \iota_1$ is completely independent of the $B$ term. 

\begin{lemma} \label{hinvi2}Let $(\mathsf{T}, \mu, \eta)$ be a Hopf monad on a category $\mathbb{X}$ with finite biproducts. Then for every pair of objects $A, B \in \mathbb{X}$, the following diagrams commute: 
    \[  \xymatrixcolsep{4pc}\xymatrix{\mathsf{T}(A) \ar[rr]^-{\iota_1} \ar[d]_-{\iota_1} && \mathsf{T}(A) \oplus \mathsf{T}(B)  \ar[d]^-{\mathsf{h}^{-1}_{A,B}} & \mathsf{T}(B) \ar[r]^-{\iota_2} \ar[d]_-{\eta_{\mathsf{T}(B)}} & \mathsf{T}(A) \oplus \mathsf{T}(B)  \ar[d]^-{\mathsf{h}^{-1}_{A,B}} \\
\mathsf{T}(A) \oplus \mathsf{T}(\mathsf{0}) \ar[r]_-{\mathsf{h}^{-1}_{A,\mathsf{0}}} & \mathsf{T}\left( A \oplus \mathsf{T}(\mathsf{0}) \right) \ar[r]_-{\mathsf{T}\left(1_A \oplus \mathsf{T}(0) \right)}  &  \mathsf{T}\left( A \oplus \mathsf{T}(B) \right) &  \mathsf{T}\mathsf{T}(B)  \ar[r]_-{\mathsf{T}(\iota_2)}  & \mathsf{T}\left( A \oplus \mathsf{T}(B) \right)
}  \]
\end{lemma}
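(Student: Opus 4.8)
The plan is to reduce both squares to identities that can be checked directly from the sum formula $\mathsf{h}_{A,B} = \iota_1 \circ \mathsf{T}(\pi_1) + \iota_2 \circ \mu_B \circ \mathsf{T}(\pi_2)$, exploiting that $\mathsf{h}_{A,B}$ is an isomorphism in order to eliminate $\mathsf{h}^{-1}$. For the right-hand square it suffices to prove $\mathsf{h}_{A,B} \circ \mathsf{T}(\iota_2) \circ \eta_{\mathsf{T}(B)} = \iota_2$ and then post-compose with $\mathsf{h}^{-1}_{A,B}$; for the left-hand square it suffices to record naturality of $\mathsf{h}$ in its second variable and specialize it to the unique map $\mathsf{0} \to B$.

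For the right-hand square, I would first use naturality of $\eta$ to rewrite $\mathsf{T}(\iota_2) \circ \eta_{\mathsf{T}(B)} = \eta_{A \oplus \mathsf{T}(B)} \circ \iota_2$. Then I expand $\mathsf{h}_{A,B} \circ \eta_{A \oplus \mathsf{T}(B)}$ using the sum formula, push $\eta_{A \oplus \mathsf{T}(B)}$ past $\mathsf{T}(\pi_1)$ and $\mathsf{T}(\pi_2)$ by naturality of $\eta$ again, and collapse $\mu_B \circ \eta_{\mathsf{T}(B)} = 1_{\mathsf{T}(B)}$ by the monad unit law, obtaining $\mathsf{h}_{A,B} \circ \eta_{A \oplus \mathsf{T}(B)} = \iota_1 \circ \eta_A \circ \pi_1 + \iota_2 \circ \pi_2$. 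Post-composing with $\iota_2$ and using the biproduct equations $\pi_1 \circ \iota_2 = 0$ and $\pi_2 \circ \iota_2 = 1_{\mathsf{T}(B)}$ (together with $\iota_1 \circ \eta_A \circ 0 = 0$ and $f + 0 = f$) annihilates the first summand and leaves precisely $\iota_2$. Post-composing with $\mathsf{h}^{-1}_{A,B}$ then gives $\mathsf{h}^{-1}_{A,B} \circ \iota_2 = \mathsf{T}(\iota_2) \circ \eta_{\mathsf{T}(B)}$, which is the commutativity of the right square.

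For the left-hand square, I would first establish that $\mathsf{h}$ is natural in its second argument, i.e. $(1_{\mathsf{T}(A)} \oplus \mathsf{T}(g)) \circ \mathsf{h}_{A,B} = \mathsf{h}_{A,B'} \circ \mathsf{T}(1_A \oplus \mathsf{T}(g))$ for $g \colon B \to B'$ (this could also be cited from \cite[Proposition 2.6]{bruguieres2011hopf}, but deriving it from the sum formula keeps the argument self-contained). The derivation just distributes composition over $+$, uses $(f \oplus f') \circ \iota_i = \iota_i \circ (-)$, naturality of $\mu$, functoriality of $\mathsf{T}$, and $\pi^{B'}_1 \circ (1_A \oplus \mathsf{T}(g)) = \pi_1$, $\pi^{B'}_2 \circ (1_A \oplus \mathsf{T}(g)) = \mathsf{T}(g) \circ \pi_2$. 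Passing to inverses yields $\mathsf{h}^{-1}_{A,B'} \circ (1_{\mathsf{T}(A)} \oplus \mathsf{T}(g)) = \mathsf{T}(1_A \oplus \mathsf{T}(g)) \circ \mathsf{h}^{-1}_{A,B}$. Specializing $B \rightsquigarrow \mathsf{0}$, $B' \rightsquigarrow B$, $g$ the unique map $0\colon\mathsf{0} \to B$, and then precomposing with $\iota_1 \colon \mathsf{T}(A) \to \mathsf{T}(A) \oplus \mathsf{T}(\mathsf{0})$ — noting $(1_{\mathsf{T}(A)} \oplus \mathsf{T}(0)) \circ \iota_1 = \iota_1$ — gives exactly $\mathsf{h}^{-1}_{A,B} \circ \iota_1 = \mathsf{T}(1_A \oplus \mathsf{T}(0)) \circ \mathsf{h}^{-1}_{A,\mathsf{0}} \circ \iota_1$, which is the left square.

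I do not expect a serious obstacle: the whole argument is a chain of applications of naturality, the monad laws, and the biproduct identities. The only point demanding care is that $\mathsf{T}$ is \emph{not} assumed additive, so $\mathsf{T}(0)$ must not be simplified to a zero map — which is precisely why the left square carries the extra factor $\mathsf{T}(1_A \oplus \mathsf{T}(0))$ rather than an identity — and correspondingly one must track which $0$'s in the computation are genuine zero maps of the biproduct enrichment and which occur only inside an application of $\mathsf{T}$.
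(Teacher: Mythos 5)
Your proposal is correct and follows essentially the same route as the paper: the right square is established by computing $\mathsf{h}_{A,B} \circ \mathsf{T}(\iota_2) \circ \eta_{\mathsf{T}(B)} = \iota_2$ from the sum formula and then post-composing with $\mathsf{h}^{-1}_{A,B}$, and the left square is the naturality of $\mathsf{h}^{-1}$ in its second variable applied to $0 \colon \mathsf{0} \to B$ together with $(1_{\mathsf{T}(A)} \oplus \mathsf{T}(0)) \circ \iota_1 = \iota_1$. The only difference is that you derive the naturality of $\mathsf{h}$ explicitly from the sum formula before inverting, whereas the paper simply invokes it; your caution about not simplifying $\mathsf{T}(0)$ to $0$ is exactly the right point to watch.
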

\begin{proof} The diagram on the left follows from the naturality of the inverse of the fusion operator and the injection:
\begin{align*}
\mathsf{T}\left(1_A \oplus \mathsf{T}(0) \right) \circ \mathsf{h}^{-1}_{A,\mathsf{0}} \circ \iota_1 = \mathsf{h}^{-1}_{A,B} \circ \left( \mathsf{T}\left(1_A\right) \oplus \mathsf{T}(0) \right) \circ \iota_1 = \mathsf{h}^{-1}_{A,B} \circ \iota_1
\end{align*}
For the diagram on the right, we first compute the following: 
\begin{align*}
     \mathsf{h}_{A,B} \circ \mathsf{T}(\iota_2) \circ \eta_{\mathsf{T}(B)} &=~ \left( \iota_1 \circ \mathsf{T}(\pi_1) + \iota_2 \circ \mu_B \circ \mathsf{T}(\pi_2) \right) \circ \mathsf{T}(\iota_2) \circ \eta_{\mathsf{T}(B)} \tag{Def. of $\mathsf{h}$} \\
     &=~ \iota_1 \circ \mathsf{T}(\pi_1)\circ \mathsf{T}(\iota_2) \circ \eta_{\mathsf{T}(B)} + \iota_2 \circ \mu_B \circ \mathsf{T}(\pi_2)\circ \mathsf{T}(\iota_2) \circ \eta_{\mathsf{T}(B)} \\
     &=~ \iota_1 \circ \mathsf{T}(0) \circ \eta_{\mathsf{T}(B)} + \iota_2 \circ \mu_B \circ \eta_{\mathsf{T}(B)} \tag{$\pi_1 \circ \iota_2 =0$ and $\pi_2 \circ \iota_2 = 1$} \\
     &=~ \iota_1 \circ \eta_{A} \circ 0 + \iota_2 \tag{Nat. of $\eta$ and $\mu \circ \eta = 1$} \\
     &=~ 0 + \iota_2 \\
     &=~ \iota_2
\end{align*}
So $\mathsf{h}_{A,B} \circ \mathsf{T}(\iota_2) \circ \eta_{\mathsf{T}(B)} = \iota_2$. By post-composing by $\mathsf{h}^{-1}_{A,B}$, we obtain $\mathsf{T}(\iota_2) \circ \eta_{\mathsf{T}(B)} = \mathsf{h}^{-1}_{A,B} \circ \iota_2$, as desired. \hfill \end{proof}

By the above lemma, it follows that $\mathsf{h}^{-1}_{A,B}: \mathsf{T}(A) \oplus \mathsf{T}(B) \to \mathsf{T}\left( A \oplus \mathsf{T}(B) \right)$ must be of the form: 
\begin{align*}
\mathsf{h}^{-1}_{A,B} = \mathsf{T}\left(1_A \oplus \mathsf{T}(0) \right) \circ \mathsf{h}^{-1}_{A,\mathsf{0}} \circ \iota_1 \circ \pi_1 + \mathsf{T}(\iota_2) \circ \eta_{\mathsf{T}(B)} \circ \pi_2
\end{align*}
Looking more closely, every part on the right side can be defined for any monad except for $\mathsf{h}^{-1}_{A,\mathsf{0}}$, meaning that the only unfixed part of $\mathsf{h}^{-1}_{A,B}$ is completely determined by $\mathsf{h}^{-1}_{A,\mathsf{0}} \circ \iota_1$. Therefore it follows that a monad is a Hopf monad if and only if there exists a map of type ${\mathsf{h}^\circ_A: \mathsf{T}(A) \to \mathsf{T}\left( A \oplus \mathsf{T}(\mathsf{0}) \right)}$ satisfying the necessary identities such that:
\[\mathsf{h}^{-1}_{A,B} := \mathsf{T}\left(1_A \oplus \mathsf{T}(0) \right) \circ \mathsf{h}^\circ_A \circ \pi_1 + \mathsf{T}(\iota_2) \circ \eta_{\mathsf{T}(B)} \circ \pi_2 \]
is the inverse of the fusion operator $\mathsf{h}$. We call such a natural transformation $\mathsf{h}^\circ$ the \emph{fusion invertor}, since it is used in the construction of the inverse of the fusion operator, and define it below. 

\section{Fusion Invertor}\label{sec:fusioninvertor}

In this section, we introduce the main novel concept of this note, which is a fusion invertor, and prove our main result that for biproducts, a monad is a Hopf monad if and only if it has a fusion invertor. 

\begin{definition} A \textbf{fusion invertor} for a monad $(\mathsf{T}, \mu, \eta)$ on a category $\mathbb{X}$ with finite biproducts is a natural transformation $\mathsf{h}^\circ_A: \mathsf{T}(A) \to \mathsf{T}\left( A \oplus \mathsf{T}(\mathsf{0}) \right)$ such that: 
    \begin{enumerate}[{\bf [F{I}.1]}]
    \item $\mathsf{T}(\pi_1) \circ \mathsf{h}^\circ_A = 1_{\mathsf{T}(A)}$ 
    \item $\mu_\mathsf{0} \circ \mathsf{T}(\pi_2) \circ \mathsf{h}^\circ_A = 0$
    \item For every object $B$, $\mathsf{T}\left(1_A \oplus \mathsf{T}(0) \right) \circ \mathsf{h}^\circ_A \circ \mathsf{T}(\pi_1) + \mathsf{T}(\iota_2) \circ \eta_{\mathsf{T}(B)} \circ \mu_B \circ \mathsf{T}(\pi_2) =  1_{\mathsf{T}\left( A \oplus \mathsf{T}(B) \right)}$
    \end{enumerate}
\end{definition}

As we will see in the proof of Proposition \ref{mainprop} below, the three axioms of a fusion invertor are precisely what is required to construct an inverse for the fusion operator. Indeed, {\bf [F{I}.1]} and {\bf [F{I}.2]} are used to show that $\mathsf{h} \circ \mathsf{h}^{-1} = 1$, while {\bf [F{I}.3]} will be used to prove that $\mathsf{h}^{-1} \circ \mathsf{h} = 1$. A useful identity to have is the special case of {\bf [F{I}.3]} when $B=\mathsf{0}$ is the zero object. 

\begin{lemma} For a $(\mathsf{T}, \mu, \eta)$ monad with a fusion invertor $\mathsf{h}^\circ_A: \mathsf{T}(A) \to \mathsf{T}\left( A \oplus \mathsf{T}(\mathsf{0}) \right)$ on a category $\mathbb{X}$ with finite biproducts, for every object $A$, the following equality holds:
\begin{description}
\item[{\bf [F{I}.3.0]}] $\mathsf{h}^\circ_A \circ \mathsf{T}(\pi_1) + \mathsf{T}(\iota_2) \circ \eta_{\mathsf{T}(\mathsf{0})} \circ \mu_\mathsf{0} \circ \mathsf{T}(\pi_2) =  1_{\mathsf{T}\left( A \oplus \mathsf{T}(\mathsf{0}) \right)}$
\end{description}
\end{lemma}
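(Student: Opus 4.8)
The plan is to obtain {\bf [FI.3.0]} simply by specializing axiom {\bf [FI.3]} to the case $B = \mathsf{0}$. The only thing that needs to be checked is that the factor $\mathsf{T}\left(1_A \oplus \mathsf{T}(0)\right)$ appearing in {\bf [FI.3]} disappears when $B = \mathsf{0}$. For this, first I would observe that since $\mathsf{0}$ is a zero object, it is in particular initial, so the hom-set $\mathbb{X}(\mathsf{0},\mathsf{0})$ is a singleton; hence the zero map $0 \colon \mathsf{0} \to \mathsf{0}$ coincides with the identity $1_\mathsf{0}$. Applying the functor $\mathsf{T}$ gives $\mathsf{T}(0) = \mathsf{T}(1_\mathsf{0}) = 1_{\mathsf{T}(\mathsf{0})}$, and therefore $1_A \oplus \mathsf{T}(0) = 1_A \oplus 1_{\mathsf{T}(\mathsf{0})} = 1_{A \oplus \mathsf{T}(\mathsf{0})}$, so that $\mathsf{T}\left(1_A \oplus \mathsf{T}(0)\right) = 1_{\mathsf{T}\left(A \oplus \mathsf{T}(\mathsf{0})\right)}$.

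Then I would substitute $B = \mathsf{0}$ directly into {\bf [FI.3]}: the first summand $\mathsf{T}\left(1_A \oplus \mathsf{T}(0)\right) \circ \mathsf{h}^\circ_A \circ \mathsf{T}(\pi_1)$ reduces to $\mathsf{h}^\circ_A \circ \mathsf{T}(\pi_1)$ by the observation above, the second summand $\mathsf{T}(\iota_2) \circ \eta_{\mathsf{T}(\mathsf{0})} \circ \mu_\mathsf{0} \circ \mathsf{T}(\pi_2)$ is already in the stated form, and the right-hand side is $1_{\mathsf{T}\left(A \oplus \mathsf{T}(\mathsf{0})\right)}$. This is exactly {\bf [FI.3.0]}. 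There is essentially no obstacle in this proof; the only point worth making explicit is the identification $0 = 1_\mathsf{0}$ of the zero endomorphism of the zero object with its identity, after which the result is an immediate instance of {\bf [FI.3]}.
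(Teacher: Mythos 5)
Your proof is correct and takes exactly the same route as the paper: specialize {\bf [FI.3]} to $B=\mathsf{0}$ and use $0 = 1_\mathsf{0}$ to conclude $\mathsf{T}\left(1_A \oplus \mathsf{T}(0)\right) = 1_{\mathsf{T}\left(A \oplus \mathsf{T}(\mathsf{0})\right)}$. Your explicit typing of the relevant zero map as $0\colon \mathsf{0}\to\mathsf{0}$ is a helpful clarification but adds nothing beyond the paper's argument.
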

\begin{proof} This immediately follows from the fact for the zero object $\mathsf{0}$, the identity map is equal to the zero map: $0 = 1_\mathsf{0}$. So {\bf [F{I}.3.0]} is simply rewritting {\bf [F{I}.3]} for the case $B=\mathsf{0}$ using that $\mathsf{T}\left(1_A \oplus \mathsf{T}(0) \right) = 1_{A \oplus \mathsf{T}(0)}$.   
\end{proof}

In particular, {\bf [F{I}.3.0]} will be very useful in showing that in special cases, the fusion invertor must always be of a specific form. We can also use {\bf [F{I}.3.0]} to show that a fusion invertor, if one exists, is unique, allowing one to speak of \emph{the} fusion operator. 

\begin{lemma} A fusion invertor, if it exists, is unique. \end{lemma}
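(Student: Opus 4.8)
The plan is to show that any fusion invertor is forced to equal a specific expression built from the fusion operator and its inverse, which are themselves uniquely determined once we know we have a Hopf monad. Concretely, suppose $\mathsf{h}^\circ$ and $\mathsf{h}^{\circ\prime}$ are both fusion invertors for the monad $(\mathsf{T}, \mu, \eta)$. By Proposition \ref{mainprop} (the main result, which I may assume), each of them yields an inverse for the fusion operator $\mathsf{h}$ via the formula $\mathsf{h}^{-1}_{A,B} = \mathsf{T}(1_A \oplus \mathsf{T}(0)) \circ \mathsf{h}^\circ_A \circ \pi_1 + \mathsf{T}(\iota_2) \circ \eta_{\mathsf{T}(B)} \circ \pi_2$. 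Since inverses are unique, the two resulting maps $\mathsf{h}^{-1}_{A,B}$ agree for all $A,B$; taking $B = \mathsf{0}$ and precomposing with $\iota_1: \mathsf{T}(A) \to \mathsf{T}(A) \oplus \mathsf{T}(\mathsf{0})$ recovers $\mathsf{h}^\circ_A = \mathsf{h}^{-1}_{A,\mathsf{0}} \circ \iota_1$ on the nose (using $\mathsf{T}(1_A \oplus \mathsf{T}(0)) = 1$ when $B = \mathsf{0}$, $\pi_1 \circ \iota_1 = 1$, and $\pi_2 \circ \iota_1 = 0$), hence $\mathsf{h}^\circ = \mathsf{h}^{\circ\prime}$.

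Alternatively — and this is the route I would actually write out if I want to avoid leaning on the full strength of Proposition \ref{mainprop} — I would give a direct argument from the axioms using {\bf [F{I}.3.0]}. The identity {\bf [F{I}.3.0]} expresses $1_{\mathsf{T}(A \oplus \mathsf{T}(\mathsf{0}))}$ as $\mathsf{h}^\circ_A \circ \mathsf{T}(\pi_1) + \mathsf{T}(\iota_2) \circ \eta_{\mathsf{T}(\mathsf{0})} \circ \mu_\mathsf{0} \circ \mathsf{T}(\pi_2)$, where the second summand does not involve $\mathsf{h}^\circ$ at all. So if $\mathsf{h}^\circ$ and $\mathsf{h}^{\circ\prime}$ are both fusion invertors, subtracting the two instances of {\bf [F{I}.3.0]} gives $(\mathsf{h}^\circ_A - \mathsf{h}^{\circ\prime}_A) \circ \mathsf{T}(\pi_1) = 0$. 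It then remains to cancel $\mathsf{T}(\pi_1)$ on the right; this works because $\mathsf{T}(\pi_1): \mathsf{T}(A \oplus \mathsf{T}(\mathsf{0})) \to \mathsf{T}(A)$ is a split epi, split by $\mathsf{h}^\circ_A$ (or $\mathsf{h}^{\circ\prime}_A$) by {\bf [F{I}.1]}. Postcomposing $(\mathsf{h}^\circ_A - \mathsf{h}^{\circ\prime}_A) \circ \mathsf{T}(\pi_1) = 0$ with — wait, precomposing with that section — gives $\mathsf{h}^\circ_A - \mathsf{h}^{\circ\prime}_A = (\mathsf{h}^\circ_A - \mathsf{h}^{\circ\prime}_A) \circ \mathsf{T}(\pi_1) \circ \mathsf{h}^\circ_A = 0$, so $\mathsf{h}^\circ_A = \mathsf{h}^{\circ\prime}_A$.

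I would present the second argument as the main proof since it is short, self-contained, and uses only {\bf [F{I}.1]} and {\bf [F{I}.3.0]} (not even {\bf [F{I}.2]}), which matches the paper's stated intent that {\bf [F{I}.3.0]} is "very useful" here. The one subtlety to get right is the direction of composition in the cancellation step: we have a section $s := \mathsf{h}^\circ_A$ with $\mathsf{T}(\pi_1) \circ s = 1$, and we know $d \circ \mathsf{T}(\pi_1) = 0$ where $d := \mathsf{h}^\circ_A - \mathsf{h}^{\circ\prime}_A$; then $d = d \circ 1 = d \circ \mathsf{T}(\pi_1) \circ s = 0 \circ s = 0$. No real obstacle is expected — the whole content is the observation that the $\mathsf{h}^\circ$-free summand in {\bf [F{I}.3.0]} lets us isolate $\mathsf{h}^\circ_A \circ \mathsf{T}(\pi_1)$, and that {\bf [F{I}.1]} makes $\mathsf{T}(\pi_1)$ right-cancellable against these particular maps. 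If anything, the minor care point is simply making sure the additive (commutative-monoid-enriched) structure is used legitimately, i.e. that subtraction of parallel maps is available — but here we only need that $f \circ \mathsf{T}(\pi_1) = g \circ \mathsf{T}(\pi_1)$ and $\mathsf{T}(\pi_1) \circ \mathsf{h}^\circ_A = 1$ together force $f = g$, which needs no subtraction at all: $f = f \circ \mathsf{T}(\pi_1) \circ \mathsf{h}^\circ_A = g \circ \mathsf{T}(\pi_1) \circ \mathsf{h}^\circ_A = g$, applied with $f = \mathsf{h}^\circ_A \circ \mathsf{T}(\pi_1)$ and $g = \mathsf{h}^{\circ\prime}_A \circ \mathsf{T}(\pi_1)$ after rearranging {\bf [F{I}.3.0]}.
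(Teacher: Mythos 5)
Your fallback argument (route one, via Proposition \ref{mainprop} and uniqueness of inverses) is sound: precomposing the formula for $\mathsf{h}^{-1}_{A,\mathsf{0}}$ with $\iota_1$ only requires adding a zero term, so no subtraction is needed, and there is no circularity since the proof of Proposition \ref{mainprop} does not use uniqueness of the fusion invertor. But the argument you explicitly elect to present as the main proof has a genuine gap, and it sits exactly where you wave it away. The hom-sets here are only commutative monoids (the paper treats negatives as a special extra hypothesis in its final section), and commutative monoids need not be cancellative. So from the two instances of {\bf [F{I}.3.0]}, i.e.\ $\mathsf{h}^\circ_A \circ \mathsf{T}(\pi_1) + K = 1 = \mathsf{h}^{\circ\prime}_A \circ \mathsf{T}(\pi_1) + K$ with $K = \mathsf{T}(\iota_2) \circ \eta_{\mathsf{T}(\mathsf{0})} \circ \mu_\mathsf{0} \circ \mathsf{T}(\pi_2)$, you may \emph{not} conclude $\mathsf{h}^\circ_A \circ \mathsf{T}(\pi_1) = \mathsf{h}^{\circ\prime}_A \circ \mathsf{T}(\pi_1)$: that ``rearranging'' is precisely a cancellation of $K$, which is subtraction in disguise. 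Your closing remark that the $f \circ \mathsf{T}(\pi_1) = g \circ \mathsf{T}(\pi_1) \Rightarrow f = g$ step needs no subtraction is correct, but it does not repair the earlier step that produces the hypothesis $f \circ \mathsf{T}(\pi_1) = g \circ \mathsf{T}(\pi_1)$ in the first place. The fact that your route avoids {\bf [F{I}.2]} is the tell-tale symptom: {\bf [F{I}.2]} is exactly what is needed to dispose of the term $K$ without cancelling it.

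The fix is to reverse the order of operations, which is the paper's proof: precompose {\bf [F{I}.3.0]} for $\mathsf{h}^\circ$ with $\mathsf{h}^{\circ\prime}_A$ \emph{first}. Then
\[
\mathsf{h}^{\circ\prime}_A = \mathsf{h}^\circ_A \circ \mathsf{T}(\pi_1) \circ \mathsf{h}^{\circ\prime}_A + K \circ \mathsf{h}^{\circ\prime}_A = \mathsf{h}^\circ_A + \mathsf{T}(\iota_2) \circ \eta_{\mathsf{T}(\mathsf{0})} \circ 0 = \mathsf{h}^\circ_A,
\]
where the first summand is evaluated by {\bf [F{I}.1]} for $\mathsf{h}^{\circ\prime}$ and the second collapses to $0$ by {\bf [F{I}.2]} for $\mathsf{h}^{\circ\prime}$; adding $0$ is legitimate in any commutative monoid, whereas cancelling $K$ is not. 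So the intended short computation goes through, but only with {\bf [F{I}.2]} and only if the split section is inserted before, not after, you try to isolate the $\mathsf{h}^\circ$-dependent summand.
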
 
\begin{proof} Let $(\mathsf{T}, \mu, \eta)$ be a monad on a category $\mathbb{X}$ with finite biproducts. Suppose that $(\mathsf{T}, \mu, \eta)$ has two fusion invertors ${\mathsf{h}^\circ_A: \mathsf{T}(A) \to \mathsf{T}\left( A \oplus \mathsf{T}(\mathsf{0}) \right)}$ and $\mathsf{h}^\star_A: \mathsf{T}(A) \to \mathsf{T}\left( A \oplus \mathsf{T}(\mathsf{0}) \right)$. Then we compute: 
\begin{align*}
    \mathsf{h}^\star_A &=~ \left( \mathsf{h}^\circ_A \circ \mathsf{T}(\pi_1) + \mathsf{T}(\iota_2) \circ \eta_{\mathsf{T}(\mathsf{0})} \circ \mu_\mathsf{0} \circ \mathsf{T}(\pi_2) \right) \circ \mathsf{h}^\star_A \tag{{\bf [F{I}.3.0]}} \\
    &=~ \mathsf{h}^\circ_A \circ \mathsf{T}(\pi_1) \circ  \mathsf{h}^\star_A + \mathsf{T}(\iota_2) \circ \eta_{\mathsf{T}(\mathsf{0})} \circ \mu_\mathsf{0} \circ \mathsf{T}(\pi_2) \circ  \mathsf{h}^\star_A \\
    &=~ \mathsf{h}^\circ_A + \mathsf{T}(\iota_2) \circ \eta_{\mathsf{T}(\mathsf{0})} \circ 0 \tag{{\bf [F{I}.1]} and {\bf [F{I}.2]} for $\mathsf{h}^\star$} \\
    &=~ \mathsf{h}^\circ_A + 0 \\
    &=~ \mathsf{h}^\circ_A
\end{align*}
Thus $\mathsf{h}^\star_A = \mathsf{h}^\circ_A$. So we conclude that a fusion invertor, if it exists, is unique. 
\end{proof}

We now prove the main result of this paper. 

\begin{proposition}\label{mainprop} A monad on a category with finite biproducts is a Hopf monad if and only if it has a fusion invertor. Explicitly, if $(\mathsf{T}, \mu, \eta)$ is a monad on a category $\mathbb{X}$ with finite biproducts, then: 
    \begin{enumerate}[{\em (i)}]
\item If $(\mathsf{T}, \mu, \eta)$ is a Hopf monad, then $(\mathsf{T}, \mu, \eta)$ has a fusion invertor $\mathsf{h}^\circ_A: \mathsf{T}(A) \to \mathsf{T}\left( A \oplus \mathsf{T}(\mathsf{0}) \right)$ defined as the following composite: 
\begin{align}\label{hinvtohcirc}\mathsf{h}^\circ_A := \xymatrixcolsep{5pc}\xymatrix{ \mathsf{T}(A) \ar[r]^-{\iota_1} & \mathsf{T}(A) \oplus \mathsf{T}(\mathsf{0})  \ar[r]^-{\mathsf{h}^{-1}_{A,\mathsf{0}}} &  \mathsf{T}\left( A \oplus \mathsf{T}(\mathsf{0}) \right)
} 
\end{align} 
\item  If $(\mathsf{T}, \mu, \eta)$ has a fusion invertor $\mathsf{h}^\circ$, then $(\mathsf{T}, \mu, \eta)$ is Hopf monad where $\mathsf{h}^{-1}_{A,B}: \mathsf{T}(A) \oplus \mathsf{T}(B) \to \mathsf{T}\left( A \oplus \mathsf{T}(B) \right)$, the inverse of the fusion operator, is defined as follows:
\begin{align}\label{hcirctohinv}
\mathsf{h}^{-1}_{A,B} = \mathsf{T}\left(1_A \oplus \mathsf{T}(0) \right) \circ \mathsf{h}^\circ_A \circ \pi_1 + \mathsf{T}(\iota_2) \circ \eta_{\mathsf{T}(B)} \circ \pi_2
\end{align}
\end{enumerate}
\end{proposition}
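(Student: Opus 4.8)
The plan is to prove the two directions separately, and in each case the work is a direct diagram chase using the additive (biproduct) structure together with the monad laws; the three fusion invertor axioms {\bf [FI.1]}--{\bf [FI.3]} are precisely calibrated to make this go through.

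\textbf{Direction (i): Hopf monad $\Rightarrow$ fusion invertor.} Suppose $\mathsf{h}$ is invertible and set $\mathsf{h}^\circ_A := \mathsf{h}^{-1}_{A,\mathsf{0}} \circ \iota_1$. Naturality of $\mathsf{h}^\circ$ follows from naturality of $\mathsf{h}^{-1}$ and of $\iota_1$. For {\bf [FI.1]}, I would compute $\mathsf{T}(\pi_1) \circ \mathsf{h}^{-1}_{A,\mathsf{0}} \circ \iota_1$ by first noting $\iota_1 \circ \mathsf{T}(\pi_1) = \pi_1 \circ \mathsf{h}_{A,\mathsf{0}}$ (read off the definition of $\mathsf{h}$, since $\pi_1 \circ \iota_2 = 0$ kills the second summand), hence $\mathsf{T}(\pi_1) \circ \mathsf{h}^{-1}_{A,\mathsf{0}} \circ \iota_1$ equals $\pi_1 \circ \iota_1 = 1_{\mathsf{T}(A)}$ after composing through $\mathsf{h}_{A,\mathsf{0}}\circ\mathsf{h}^{-1}_{A,\mathsf{0}} = 1$. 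For {\bf [FI.2]}, similarly $\mu_\mathsf{0} \circ \mathsf{T}(\pi_2) = \pi_2 \circ \mathsf{h}_{A,\mathsf{0}}$, so $\mu_\mathsf{0} \circ \mathsf{T}(\pi_2) \circ \mathsf{h}^\circ_A = \pi_2 \circ \mathsf{h}_{A,\mathsf{0}} \circ \mathsf{h}^{-1}_{A,\mathsf{0}} \circ \iota_1 = \pi_2 \circ \iota_1 = 0$. For {\bf [FI.3]}, I would invoke Lemma \ref{hinvi2}: it tells us $\mathsf{h}^{-1}_{A,B} = \mathsf{T}(1_A \oplus \mathsf{T}(0)) \circ \mathsf{h}^{-1}_{A,\mathsf{0}} \circ \iota_1 \circ \pi_1 + \mathsf{T}(\iota_2) \circ \eta_{\mathsf{T}(B)} \circ \pi_2$, i.e. $\mathsf{h}^{-1}_{A,B}$ has exactly the form \eqref{hcirctohinv}; then {\bf [FI.3]} is the equation $\mathsf{h}^{-1}_{A,B} \circ \mathsf{h}_{A,B} = 1$ with the definition of $\mathsf{h}$ expanded, using $\pi_i \circ \iota_j = \delta_{ij}$ to distribute the composite across the sum.

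\textbf{Direction (ii): fusion invertor $\Rightarrow$ Hopf monad.} Define $\mathsf{h}^{-1}_{A,B}$ by \eqref{hcirctohinv}; I must check it is a two-sided inverse of $\mathsf{h}_{A,B}$, and naturality (which is automatic since every ingredient is natural). For $\mathsf{h}^{-1}_{A,B} \circ \mathsf{h}_{A,B} = 1$: substitute $\mathsf{h}_{A,B} = \iota_1 \circ \mathsf{T}(\pi_1) + \iota_2 \circ \mu_B \circ \mathsf{T}(\pi_2)$ into \eqref{hcirctohinv}, distribute, and use $\pi_1 \circ \iota_1 = 1$, $\pi_1 \circ \iota_2 = 0$, $\pi_2 \circ \iota_1 = 0$, $\pi_2 \circ \iota_2 = 1$; the cross terms vanish and what remains is exactly the left-hand side of {\bf [FI.3]}, which equals $1$. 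For $\mathsf{h}_{A,B} \circ \mathsf{h}^{-1}_{A,B} = 1$: post-compose \eqref{hcirctohinv} with $\mathsf{h}_{A,B}$, distribute over the sum, and compute the two resulting composites $\mathsf{h}_{A,B} \circ \mathsf{T}(1_A \oplus \mathsf{T}(0)) \circ \mathsf{h}^\circ_A$ and $\mathsf{h}_{A,B} \circ \mathsf{T}(\iota_2) \circ \eta_{\mathsf{T}(B)}$ separately. The second is $\iota_2$ by the same computation already carried out inside the proof of Lemma \ref{hinvi2}. The first I would reduce using naturality of $\mathsf{m}$ (equivalently of the $\mathsf{T}(\pi_i)$) to push $\mathsf{T}(1_A \oplus \mathsf{T}(0))$ through, so that $\mathsf{T}(\pi_1)$ becomes $\mathsf{T}(\pi_1)$ on $\mathsf{T}(A \oplus \mathsf{T}(\mathsf{0}))$ and $\mathsf{T}(\pi_2)$ becomes $\mathsf{T}(0) \circ \mathsf{T}(\pi_2)$; then {\bf [FI.1]} handles the first summand (giving $\iota_1 \circ \pi_1$ applied to the appropriate thing) and {\bf [FI.2]} kills the second summand (since $\mu_\mathsf{0} \circ \mathsf{T}(0) \circ \mathsf{T}(\pi_2)$ relates back to {\bf [FI.2]} via naturality of $\mu$). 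Adding the two pieces gives $\iota_1 \circ \pi_1 + \iota_2 \circ \pi_2 = 1_{\mathsf{T}(A) \oplus \mathsf{T}(B)}$.

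\textbf{Expected main obstacle.} The routine parts are the $\pi_i \circ \iota_j$ bookkeeping. The one step needing genuine care is the first composite in $\mathsf{h}_{A,B} \circ \mathsf{h}^{-1}_{A,B} = 1$, namely computing $\mathsf{h}_{A,B} \circ \mathsf{T}(1_A \oplus \mathsf{T}(0)) \circ \mathsf{h}^\circ_A$: one has to correctly commute $\mathsf{T}(1_A \oplus \mathsf{T}(0))$ past $\mathsf{T}(\pi_1)$ and $\mathsf{T}(\pi_2)$ (via $\pi_1 \circ (1_A \oplus \mathsf{T}(0)) = \pi_1$ and $\pi_2 \circ (1_A \oplus \mathsf{T}(0)) = \mathsf{T}(0) \circ \pi_2$ inside $\mathsf{T}$) and then past $\mu_B$ (via naturality of $\mu$, turning $\mu_B \circ \mathsf{T}\mathsf{T}(0)$ into $\mathsf{T}(0) \circ \mu_\mathsf{0}$, wait — rather $\mu_B \circ \mathsf{T}(\mathsf{T}(0)) = \mathsf{T}(0) \circ \mu_\mathsf{0}$ by naturality of $\mu$ at the map $0 : \mathsf{0} \to B$), so that {\bf [FI.2]} applies verbatim. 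Getting this reduction right is the crux; everything else is mechanical additive algebra.
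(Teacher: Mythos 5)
Your proposal is correct and follows essentially the same route as the paper: the same definition of $\mathsf{h}^\circ$ via $\mathsf{h}^{-1}_{A,\mathsf{0}}\circ\iota_1$, the same use of $\pi_i\circ\mathsf{h}=\dots$ and of Lemma \ref{hinvi2} for \textbf{[FI.1]}--\textbf{[FI.3]}, and the same two key identities $\mathsf{h}_{A,B}\circ\mathsf{T}(\iota_2)\circ\eta_{\mathsf{T}(B)}=\iota_2$ and (in effect) $\mathsf{h}_{A,\mathsf{0}}\circ\mathsf{h}^\circ_A=\iota_1$ for the converse, with your handling of $\mathsf{h}_{A,B}\circ\mathsf{T}(1_A\oplus\mathsf{T}(0))$ via naturality of $\mu$ at $0\colon\mathsf{0}\to B$ being a trivially equivalent variant of the paper's appeal to naturality of $\mathsf{h}$. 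The only blemish is the typo ``$\iota_1\circ\mathsf{T}(\pi_1)=\pi_1\circ\mathsf{h}_{A,\mathsf{0}}$'', which should read $\mathsf{T}(\pi_1)=\pi_1\circ\mathsf{h}_{A,\mathsf{0}}$, as your subsequent use of it correctly assumes.
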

\begin{proof} Suppose that $(\mathsf{T}, \mu, \eta)$ is a Hopf monad. By construction $\mathsf{h}^\circ$ is natural, so it remains to show that $\mathsf{h}^\circ$ satisfies the three identities \textbf{[FI.1]}, \textbf{[FI.2]}, and \textbf{[FI.3]}. So we compute: 
    \begin{enumerate}[{\em (i)}] \item $\mathsf{T}(\pi_1) \circ \mathsf{h}^\circ_A = 1_{\mathsf{T}(A)}$
\begin{align*}
\mathsf{T}(\pi_1) \circ \mathsf{h}^\circ_A &=~ \mathsf{T}(\pi_1) \circ  \mathsf{h}^{-1}_{A,B} \circ \iota_1 \tag{Def. of $\mathsf{h}^\circ$} \\
 &=~ \pi_1 \circ \mathsf{h}^\circ_A \circ \mathsf{h}^{-1}_{A,B} \circ \iota_1 \tag{Def. of $\mathsf{h}$} \\ 
&=~ \pi_1 \circ \iota_1 \\
&=~  1_{\mathsf{T}(A)} 
\end{align*}
\item $\mu_\mathsf{0} \circ \mathsf{T}(\pi_2) \circ \mathsf{h}^\circ_A = 0$
\begin{align*}
    \mu_\mathsf{0} \circ \mathsf{T}(\pi_2) \circ \mathsf{h}^\circ_A &=~ \mu_\mathsf{0} \circ \mathsf{T}(\pi_2) \circ \mathsf{h}^{-1}_{A,B} \circ \iota_1 \tag{Def. of $\mathsf{h}^\circ$} \\
    &=~ \pi_2 \circ \mathsf{h}^\circ_A \circ \mathsf{h}^{-1}_{A,\mathsf{0}} \circ \iota_1 \tag{Def. of $\mathsf{h}$} \\ 
    &=~ \pi_2 \circ \iota_1 \\
    &=~ 0 
\end{align*}
\item $\mathsf{T}\left(1_A \oplus \mathsf{T}(0) \right) \circ \mathsf{h}^\circ_A \circ \mathsf{T}(\pi_1) + \mathsf{T}(\iota_2) \circ \eta_{\mathsf{T}(B)} \circ \mu_B \circ \mathsf{T}(\pi_2) =  1_{\mathsf{T}\left( A \oplus \mathsf{T}(B) \right)}$
\begin{align*}
1_{\mathsf{T}\left( A \oplus \mathsf{T}(B) \right)} &=~ \mathsf{h}^{-1}_{A,B} \circ \mathsf{h}_{A,B} \\
&=~ \mathsf{h}^{-1}_{A,B} \circ \left( \iota_1 \circ \mathsf{T}(\pi_1) + \iota_2 \circ \mu_B \circ \mathsf{T}(\pi_2) \right) \tag{Def. $\mathsf{h}$} \\
&=~ \mathsf{h}^{-1}_{A,B} \circ \iota_1 \circ \mathsf{T}(\pi_1)   + \mathsf{h}^{-1}_{A,B} \circ \iota_2 \circ \mu_B \circ \mathsf{T}(\pi_2) \\
&=~ \mathsf{T}\left(1_A \oplus \mathsf{T}(0) \right) \circ  \mathsf{h}^{-1}_{A,\mathsf{0}} \circ \iota_1 \circ \mathsf{T}(\pi_1) +  \mathsf{T}(\iota_2) \circ \eta_{\mathsf{T}(B)} \circ \mu_B \circ \mathsf{T}(\pi_2) \tag{Lemma \ref{hinvi2}} \\
&=~ \mathsf{h}^\circ_A \circ \mathsf{T}(\pi_1) + \mathsf{T}(\iota_2) \circ \eta_{\mathsf{T}(B)} \circ \mu_B \circ \mathsf{T}(\pi_2) \tag{Def. of $\mathsf{h}^\circ$}
\end{align*}
\end{enumerate}
So we conclude that $\mathsf{h}^\circ$ is a fusion invertor. 

Conversely, suppose that $(\mathsf{T}, \mu, \eta)$ has a fusion invertor $\mathsf{h}^\circ$. We must show that $\mathsf{h}^{-1}$ is an inverse of $\mathsf{h}$. Using \textbf{[FI.3]}, we first compute that: 
 \begin{align*}
        \mathsf{h}^{-1}_{A,B} \circ \mathsf{h}_{A,B} &=~ \left(  \mathsf{T}\left(1_A \oplus \mathsf{T}(0) \right) \circ \mathsf{h}^\circ_A \circ \pi_1 + \mathsf{T}(\iota_2) \circ \eta_{\mathsf{T}(B)} \circ \pi_2 \right) \circ \mathsf{h}_{A,B} \tag{Def. of $\mathsf{h}^{-1}$} \\
        &=~  \mathsf{T}\left(1_A \oplus \mathsf{T}(0) \right) \circ \mathsf{h}^\circ_A \circ \pi_1 \circ \mathsf{h}_{A,B} + \mathsf{T}(\iota_2) \circ \eta_{\mathsf{T}(B)} \circ \pi_2 \circ \mathsf{h}_{A,B} \\
 &=~   \mathsf{T}\left(1_A \oplus \mathsf{T}(0) \right) \circ \mathsf{h}^\circ_A \circ \mathsf{T}(\pi_1) + \mathsf{T}(\iota_2) \circ \eta_{\mathsf{T}(B)} \circ \mu_B \circ \mathsf{T}(\pi_2) \tag{Def. of $\mathsf{h}$} \\
 &=~  1_{\mathsf{T}\left( A \oplus \mathsf{T}(B) \right)} \tag{\textbf{[FI.3]}}
    \end{align*}
So $\mathsf{h}^{-1}_{A,B} \circ \mathsf{h}_{A,B} =  1_{\mathsf{T}\left( A \oplus \mathsf{T}(B) \right)}$. For the other direction, first recall that in the proof of Lemma \ref{hinvi2}, we computed that:
\begin{align}
    \mathsf{h}_{A,B} \circ \mathsf{T}(\iota_2) \circ \eta_{\mathsf{T}(B)} = \iota_2 \label{hhinv1}
\end{align}
Using \textbf{[FI.1]} and \textbf{[FI.2]}, we can also compute that: 
\begin{align*}
     \mathsf{h}_{A,\mathsf{0}} \circ \mathsf{h}^\circ_A &=~ \left( \iota_1 \circ \mathsf{T}(\pi_1) + \iota_2 \circ \mu_\mathsf{0} \circ \mathsf{T}(\pi_2) \right) \circ \mathsf{h}^\circ_A \tag{Def. of $\mathsf{h}$} \\ 
     &=~ \iota_1 \circ \mathsf{T}(\pi_1)  \circ \mathsf{h}^\circ_A + \iota_2 \circ \mu_\mathsf{0} \circ \mathsf{T}(\pi_2)  \circ \mathsf{h}^\circ_A \\
     &=~ \iota_1 + \iota_2 \circ 0 \tag{\textbf{[FI.1]} and \textbf{[FI.2]}} \\
     &=~ \iota_1 + 0 \\
     &=~ \iota_1 
\end{align*}
So we have that: 
\begin{align}
   \mathsf{h}_{A,\mathsf{0}} \circ \mathsf{h}^\circ_A = \iota_1 \label{hhinv2}
\end{align}
Then we compute that: 
\begin{align*}
    \mathsf{h}_{A,B} \circ \mathsf{h}^{-1}_{A,B} &=~\mathsf{h}_{A,B} \circ  \left(  \mathsf{T}\left(1_A \oplus \mathsf{T}(0) \right) \circ \mathsf{h}^\circ_A \circ \pi_1 + \mathsf{T}(\iota_2) \circ \eta_{\mathsf{T}(B)} \circ \pi_2 \right) \tag{Def. of $\mathsf{h}^\circ$}  \\
    &=~  \mathsf{h}_{A,B}  \circ \mathsf{T}\left(1_A \oplus \mathsf{T}(0) \right) \circ \mathsf{h}^\circ_A \circ \pi_1 +  \mathsf{h}_{A,B} \circ \mathsf{T}(\iota_2) \circ \eta_{\mathsf{T}(B)} \circ \pi_2  \\
        &=~ \left( \mathsf{T}\left(1_A \right) \oplus \mathsf{T}(0) \right) \circ \mathsf{h}_{A,\mathsf{0}}  \circ \mathsf{h}^\circ_A \circ \pi_1 +  \mathsf{h}_{A,B} \circ \mathsf{T}(\iota_2) \circ \eta_{\mathsf{T}(B)} \circ \pi_2  \tag{Nat. of $\mathsf{h}$} \\
   &=~ \left( 1_{\mathsf{T}(A)} \oplus \mathsf{T}(0) \right) \circ \iota_1 \circ \pi_1 + \iota_2 \circ \pi_2 \tag{(\ref{hhinv1}) and (\ref{hhinv2})} \\
    &=~ \iota_1 \circ \pi_1 + \iota_2 \circ \pi_2 \tag{Nat. of $\iota_1$} \\
   &=~ 1_{\mathsf{T}(A) \oplus \mathsf{T}(B)} \tag{$\iota_1 \circ \pi_1 + \iota_2 \circ \pi_2 = 1$}
\end{align*}
So $\mathsf{h}_{A,B} \circ \mathsf{h}^{-1}_{A,B} = 1_{\mathsf{T}(A) \oplus \mathsf{T}(B)}$. So we conclude that the fusion operator is a natural isomorphism, and therefore that $(\mathsf{T}, \mu, \eta)$ is a Hopf monad. 
\end{proof}

We conclude this section by considering the fusion invertors for the Hopf monad examples in Section \ref{sec:hopfbiprod}. 

\begin{example}\normalfont \label{ex:Gfi} For any Abelian group $G$ and the induced Hopf monad $\mathsf{T}(-) := G \oplus -$ on $\mathsf{CMON}$ (Example \ref{ex:Gx-}), the fusion invertor $\mathsf{h}^\circ_M: G \oplus M \to G \oplus M \oplus G \oplus \mathsf{0}$ is defined as follows: 
\[ \mathsf{h}^\circ_M(g,m) = (g, m, -g, 0) \]
In Section \ref{sec:representable}, we will show that the fusion invertor is always of this form for any representable Hopf monad. 
\end{example}

\begin{example} \normalfont For the Hopf monad $\mathsf{T}(-):= \mathbb{Z}_2 \otimes_\mathbb{Z} -$ on $\mathsf{Ab}$ (Example \ref{ex:Z2}), the fusion invertor $\mathsf{h}^\circ_G: \mathbb{Z}_2 \otimes_\mathbb{Z} G \to \mathbb{Z}_2 \otimes_\mathbb{Z} \left( G \oplus \left( \mathbb{Z}_2 \otimes_\mathbb{Z} \mathsf{0} \right) \right)$ is defined as follows: 
    \[ \mathsf{h}^\circ_G(x \otimes g) = x \otimes (g,0) \]
In fact, the fusion operator is precisely $\mathsf{h}^\circ_G = \mathbb{Z}_2 \otimes_\mathbb{Z} \iota_1$. In Section \ref{sec:idempotent}, we will show that the fusion invertor is always of this form for any idempotent Hopf monad. 
\end{example}

\begin{example}\normalfont For any Abelian group $G$ and the Hopf monad $\mathsf{T}(M,N) := (G \oplus M, \mathsf{0})$ on $\mathsf{CMON} \times \mathsf{CMON}$, the fusion invertor is given by the pair $\mathsf{h}^\circ_{(M,N)} := (\mathsf{h}^\circ_M, 0)$, where $\mathsf{h}^\circ_M: G \oplus M \to G \oplus M \oplus G \oplus \mathsf{0}$ is the fusion invertor defined in Example \ref{ex:Gfi}. 
\end{example}

\section{Representable Hopf Monads}\label{sec:representable}

As discussed in the introduction, the main class of Hopf monads are those induced by Hopf monoids, which are called representable Hopf monads. In an arbitrary symmetric monoidal category, every (cocommutative) bimonoid $B$ induces a (symmetric) bimonad $\mathsf{T}(-) := B \otimes -$, while every (cocommutative) Hopf monoid $H$ induces a (symmetric) Hopf monad $\mathsf{T}(-) := H \otimes -$, where the inverse of the fusion operator is constructed using the antipode \cite[Example 2.10]{bruguieres2011hopf}. In this section, we consider representable Hopf monads in the biproduct case and show that the fusion invertor is always of the same form. 


In a category $\mathbb{X}$ with finite biproducts, recall that every object $H$ has a canonical and unique bimonoid structure, which is also bicommutative. The multiplication $H \oplus H \to H$ is the canonical codiagonal of the coproduct, the comultiplication $H \to H \oplus H$ of the product, while the unit $\mathsf{0} \to H$ and counit $H \to \mathsf{0}$ are the zero maps from and to the zero object. As such for every object $H$, we obtain a monad $\mathsf{T}(-):= H \oplus -$ on $\mathbb{X}$, where the monad multiplication $\mu_A: H \oplus A \oplus A \to H \oplus A$ and the monad unit $\eta_A: A \to H \oplus A$ are respectively defined as follows using the additive structure of the biproduct: 
\begin{align}\label{Hmonad}
    \mu_A := \iota_1 \circ \pi_1 +  \iota_1 \circ \pi_2 + \iota_2 \circ \pi_3 && \eta_A := \iota_2
\end{align}
Thus the induced fusion operator $\mathsf{h}_{A,B} : H \oplus A \oplus H \oplus B \to H \oplus A \oplus H \oplus B$ is given by: 
\begin{align}
   \mathsf{h}_{A,B} := \iota_1 \circ \pi_1 + \iota_2 \circ \pi_2 + \iota_3 \circ \pi_1 + \iota_3 \circ \pi_3 + \iota_4 \circ \pi_4 
\end{align}
In a category $\mathbb{X}$ with finite biproducts, an object $H$ is a Hopf monoid if and only if the identity $1_H: H \to H$ has an additive inverse $-1_H: H \to H$, that is, $1_H + (-1_H) = 0$. In this case, $-1_H$ is the antipode for the canonical bimonoid structure on $H$. Using $-1_H$ we may construct the inverse of the fusion operator $\mathsf{h}^{-1}_{A,B} : H \oplus A \oplus H \oplus B \to H \oplus A \oplus H \oplus B$ as follows: 
\begin{align}
   \mathsf{h}^{-1}_{A,B} := \iota_1 \circ \pi_1 + \iota_2 \circ \pi_2 + \iota_3 \circ (-1_H) \circ \pi_1 + \iota_3 \circ \pi_3 + \iota_4 \circ \pi_4 
\end{align}
Using ``element notation'', one can see that this is indeed a generalization of Example \ref{ex:Gx-}. 

\begin{lemma} Let $H$ be an object in a category $\mathbb{X}$ with finite biproducts such that its identity $1_H: H \to H$ has an additive inverse $-1_H: H \to H$. Then for the Hopf monad $H \oplus -$ on $\mathbb{X}$, the induced fusion invertor $\mathsf{h}^\circ_A: H \oplus A \to H \oplus A \oplus H \oplus \mathsf{0}$ is of the form: 
\begin{align} \label{H+hinvdef}
    \mathsf{h}^\circ_A = \iota_1 \circ \pi_1 + \iota_2 \circ \pi_1 + \iota_3 \circ (-1_H) \circ \pi_1 
\end{align}
\end{lemma}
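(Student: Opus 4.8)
The plan is to reduce everything to Proposition~\ref{mainprop}(i). Since $H\oplus-$ has already been exhibited as a Hopf monad, with fusion operator $\mathsf{h}$ inverted by the map $\mathsf{h}^{-1}_{A,B}=\iota_1\circ\pi_1+\iota_2\circ\pi_2+\iota_3\circ(-1_H)\circ\pi_1+\iota_3\circ\pi_3+\iota_4\circ\pi_4$ written out just above the statement, Proposition~\ref{mainprop}(i) tells us that its fusion invertor is forced to be the composite $\mathsf{h}^\circ_A=\mathsf{h}^{-1}_{A,\mathsf{0}}\circ\iota_1$, where $\iota_1\colon\mathsf{T}(A)\to\mathsf{T}(A)\oplus\mathsf{T}(\mathsf{0})$ is the first coproduct injection. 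So the whole proof is the evaluation of this single composite, using only the biproduct calculus; naturality of $\mathsf{h}^\circ$ requires no separate argument, being already part of Proposition~\ref{mainprop}.

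Concretely, I would first make the identification $\mathsf{T}(A)\oplus\mathsf{T}(\mathsf{0})=(H\oplus A)\oplus(H\oplus\mathsf{0})\cong H\oplus A\oplus H\oplus\mathsf{0}$ explicit at the level of structure maps: composing $\iota_1$ with the four projections of $\mathsf{T}(A)\oplus\mathsf{T}(\mathsf 0)$ gives $\pi_1\circ\iota_1=\pi_1$ and $\pi_2\circ\iota_1=\pi_2$, the two projections of $\mathsf{T}(A)=H\oplus A$, while $\pi_3\circ\iota_1=0$ and $\pi_4\circ\iota_1=0$ because $\iota_1$ lands in the first two summands. Then I would set $B=\mathsf{0}$ in the formula for $\mathsf{h}^{-1}_{A,B}$, precompose with $\iota_1$, distribute over the sum (composition is additive in a category with biproducts), and discard the last two summands via $\pi_3\circ\iota_1=0=\pi_4\circ\iota_1$. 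What survives is $\iota_1\circ\pi_1+\iota_2\circ\pi_2+\iota_3\circ(-1_H)\circ\pi_1$, which is precisely \eqref{H+hinvdef}. (One small note for the record: the middle summand of \eqref{H+hinvdef} should read $\iota_2\circ\pi_2$ rather than $\iota_2\circ\pi_1$, the latter not even being well typed since $\pi_1\colon H\oplus A\to H$ and $\iota_2$ injects the $A$-summand.)

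I do not anticipate any genuine obstacle; the only step demanding a little care is the bookkeeping above — keeping straight which of the four projections of $\mathsf{T}(A)\oplus\mathsf{T}(\mathsf{0})$ vanish on $\iota_1$ and which restrict to the projections of $\mathsf{T}(A)$. If one preferred not to invoke Proposition~\ref{mainprop}, a slightly more laborious alternative would be to verify directly that the $\mathsf{h}^\circ_A$ of \eqref{H+hinvdef} satisfies \textbf{[FI.1]}, \textbf{[FI.2]}, \textbf{[FI.3]} (equivalently \textbf{[FI.1]}, \textbf{[FI.2]}, \textbf{[FI.3.0]}, together with uniqueness of the fusion invertor): each identity follows by expanding the composite of $\mathsf{h}^\circ_A$ with the relevant $\mathsf{T}(\pi_i)$, $\eta$, $\mu$ using \eqref{Hmonad}, the relations $\pi_i\circ\iota_i=1$ and $\pi_i\circ\iota_j=0$ for $i\neq j$, and the defining relation $1_H+(-1_H)=0$ of the antipode, which is exactly what cancels on the third summand. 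But the route via $\mathsf{h}^\circ_A=\mathsf{h}^{-1}_{A,\mathsf{0}}\circ\iota_1$ is by far the shortest.
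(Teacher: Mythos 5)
Your proposal is correct and follows exactly the paper's route: invoke Proposition~\ref{mainprop}(i) to get $\mathsf{h}^\circ_A = \mathsf{h}^{-1}_{A,\mathsf{0}}\circ\iota_1$ (with $\iota_1 = \iota_1\circ\pi_1 + \iota_2\circ\pi_2$ in the four-fold biproduct) and then expand using $\pi_i\circ\iota_i = 1$ and $\pi_i\circ\iota_j = 0$ for $i\neq j$. Your side remark is also right: the displayed formula in the statement contains a typo, and its middle summand should be $\iota_2\circ\pi_2$ rather than $\iota_2\circ\pi_1$, consistent with the element-notation version $\mathsf{h}^\circ_M(g,m)=(g,m,-g,0)$ of Example~\ref{ex:Gfi}.
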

\begin{proof} By Proposition \ref{mainprop}, applying (\ref{hinvtohcirc}) to the Hopf monad $H \oplus -$, we get that the fusion invertor $\mathsf{h}^\circ$ is given by the forumla $\mathsf{h}^\circ_A := \mathsf{h}^{-1}_{A,\mathsf{0}} \circ \left( \iota_1 \circ \pi_1 + \iota_2 \circ \pi_2 \right)$. Then expanding out the definition of $\mathsf{h}^{-1}$ and using the biproduct identities that $\pi_i \circ \iota_j = 0$ if $i \neq j$ and $\pi_i \circ \iota_i = 1$, we obtain precisely (\ref{H+hinvdef}). 
\end{proof}

Again, one can use ``element notation'' to see that this fusion invertor is indeed a generalization of Example \ref{ex:Gfi}.

\section{Idempotent Monads}\label{sec:idempotent}

In this section, we consider the case of Hopf monads which are also idempotent monads, since idempotent Hopf monads were of particular interest in \cite{hasegawa2022traced}. We will show that for an idempotent Hopf monad, the fusion invertor and inverse of the fusion operator are always of a specific form. In fact, we will show that checking that an idempotent monad is a Hopf monad amounts to checking one identity. 

Recall that an \textbf{idempotent monad} \cite[Proposition 4.2.3]{borceux1994handbook} on a category $\mathbb{X}$ is a monad $(T, \mu, \eta)$ on $\mathbb{X}$ such that the monad multiplication ${\mu_A: TT(A) \to T(A)}$ is a natural isomorphism, so $TT(A) \cong T(A)$ and $\mu^{-1}_A = \eta_{\mathsf{T}(A)} =  \mathsf{T}(\eta_A)$. Our first observation is that idempotent monads preserve zero maps and zero objects. 

\begin{lemma} \label{idemp0} Let $(\mathsf{T}, \mu, \eta)$ be a idempotent monad on a category $\mathbb{X}$ with finite biproducts. Then $\mathsf{T}$ preserves zero maps, that is, $\mathsf{T}(0) = 0$. Therefore $\mathsf{T}$ also preserves the zero object. 
\end{lemma}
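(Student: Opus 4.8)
The plan is to reduce everything to the single claim that $\mathsf{T}(\mathsf{0})$ is a zero object, and then read off the statement for arbitrary zero maps. Recall that in a category with finite biproducts the zero morphism $0 \colon A \to B$ is precisely the (unique) morphism that factors through a zero object, namely the composite $A \to \mathsf{0} \to B$ of the terminal map with the initial map; moreover any two zero objects are uniquely isomorphic, so \emph{any} morphism factoring through some zero object is the zero morphism. Granting that $\mathsf{T}(\mathsf{0})$ is a zero object, for every $A, B$ the morphism $\mathsf{T}(0) \colon \mathsf{T}(A) \to \mathsf{T}(B)$ is the composite $\mathsf{T}(A) \to \mathsf{T}(\mathsf{0}) \to \mathsf{T}(B)$ of $\mathsf{T}$ applied to $A \to \mathsf{0}$ and $\mathsf{0} \to B$, hence factors through a zero object and is therefore the zero morphism $0 \colon \mathsf{T}(A) \to \mathsf{T}(B)$. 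In particular $\mathsf{T}(\mathsf{0})$ being a zero object already says $\mathsf{T}$ preserves the zero object. So the whole proof amounts to showing that $\mathsf{T}(\mathsf{0})$ is a zero object, equivalently that $1_{\mathsf{T}(\mathsf{0})} = 0$.

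To prove this, I would use the unit $\eta_\mathsf{0} \colon \mathsf{0} \to \mathsf{T}(\mathsf{0})$ together with the unique morphism $!\colon \mathsf{T}(\mathsf{0}) \to \mathsf{0}$ into the (terminal) zero object. Since $\mathsf{0}$ has a single endomorphism, $! \circ \eta_\mathsf{0} = 1_\mathsf{0}$, so applying $\mathsf{T}$ and using functoriality gives $\mathsf{T}(!) \circ \mathsf{T}(\eta_\mathsf{0}) = \mathsf{T}(! \circ \eta_\mathsf{0}) = \mathsf{T}(1_\mathsf{0}) = 1_{\mathsf{T}(\mathsf{0})}$. Now I invoke idempotency in the form $\mathsf{T}(\eta_\mathsf{0}) = \eta_{\mathsf{T}(\mathsf{0})}$ and naturality of $\eta$ at the morphism $!\colon \mathsf{T}(\mathsf{0}) \to \mathsf{0}$, which gives $\mathsf{T}(!) \circ \eta_{\mathsf{T}(\mathsf{0})} = \eta_\mathsf{0} \circ {!}$. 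Chaining these together, $1_{\mathsf{T}(\mathsf{0})} = \mathsf{T}(!) \circ \mathsf{T}(\eta_\mathsf{0}) = \mathsf{T}(!) \circ \eta_{\mathsf{T}(\mathsf{0})} = \eta_\mathsf{0} \circ {!}$; but $\eta_\mathsf{0} \circ {!}$ visibly factors through $\mathsf{0}$, hence equals the zero morphism $\mathsf{T}(\mathsf{0}) \to \mathsf{T}(\mathsf{0})$. Therefore $1_{\mathsf{T}(\mathsf{0})} = 0$, so $\mathsf{T}(\mathsf{0})$ is a zero object; equivalently $\eta_\mathsf{0}$ and $!$ are mutually inverse, so $\mathsf{T}(\mathsf{0}) \cong \mathsf{0}$.

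There is no genuine obstacle here; the one point that needs a moment's thought is spotting that the idempotency identity $\eta_{\mathsf{T}(A)} = \mathsf{T}(\eta_A)$ evaluated at $A = \mathsf{0}$, fed into naturality of $\eta$, is exactly what forces the idempotent endomorphism $\eta_\mathsf{0} \circ {!}$ of $\mathsf{T}(\mathsf{0})$ to coincide with $1_{\mathsf{T}(\mathsf{0})}$. Everything else is routine biproduct bookkeeping, namely that a morphism factoring through a zero object is the zero morphism, and that a zero object has a unique endomorphism.
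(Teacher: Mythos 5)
Your proof is correct, but it takes a different route from the paper's. The paper proves $\mathsf{T}(0)=0$ directly for an arbitrary zero map $0\colon A \to B$ by a short equational computation: it inserts $\mu_A \circ \eta_{\mathsf{T}(A)} = 1$, rewrites $0$ as $0 \circ \eta_A$, cancels $\mathsf{T}(\eta_A)\circ \mu_A = 1$ using idempotency, and finishes with naturality of $\eta$; preservation of the zero object is then a corollary. You invert that order: you first show $1_{\mathsf{T}(\mathsf{0})} = \mathsf{T}(!)\circ\mathsf{T}(\eta_{\mathsf{0}}) = \mathsf{T}(!)\circ\eta_{\mathsf{T}(\mathsf{0})} = \eta_{\mathsf{0}}\circ{!} = 0$, so that $\mathsf{T}(\mathsf{0})$ is a zero object, and then read off $\mathsf{T}(0)=0$ for all $A,B$ by factoring $0\colon A \to B$ through $\mathsf{0}$. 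Both arguments hinge on the same idempotency identity $\mathsf{T}(\eta) = \mu^{-1} = \eta_{\mathsf{T}}$ (which the paper records when recalling idempotent monads), so they are close in substance. What your version buys is a slightly sharper statement: the core computation uses only the existence of a zero object and the standard identification of zero morphisms with morphisms factoring through it, not the full biproduct/enrichment structure, so it shows that an idempotent monad on any pointed category preserves the zero object and zero maps. What the paper's version buys is brevity and uniformity with the rest of the text, which works throughout in the additive notation of the biproduct enrichment. No gaps in your argument.
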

\begin{proof} Suppose that $(\mathsf{T}, \mu, \eta)$ is an idempotent monad Then we compute: 
\begin{align*}
    \mathsf{T}(0) &=~ \mathsf{T}(0) \circ \mu_A \circ \eta_{\mathsf{T}(A)} \tag{$\mu \circ \eta =1$} \\
    &=~ \mathsf{T}\left( 0 \circ \eta_A \right)  \circ \mu_A \circ \eta_{\mathsf{T}(A)}  \\
    &=~ \mathsf{T}(0) \circ \mathsf{T}(\eta_A)  \circ \mu_A \circ \eta_{\mathsf{T}(A)} \\
    &=~ \mathsf{T}(0) \circ \eta_{\mathsf{T}(A)} \tag{Idem. monad so $\mathsf{T}(\eta_A) \circ \mu = 1$} \\
    &=~ \eta_{B} \circ 0 \tag{Nat. of $\eta$} \\
    &=~ 0 
\end{align*}
So $\mathsf{T}(0) = 0$. 
\end{proof}

The inverse of the above lemma is not necessarily true: some monads preserve zero maps but are not idempotent monads. However, for Hopf monads, preserving zero maps is equivalent to being idempotent. Using this fact, we can show that the fusion invertor $\mathsf{h}^\circ_A: \mathsf{T}(A) \to \mathsf{T}\left( A \oplus \mathsf{T}(\mathsf{0}) \right)$ must always be $\mathsf{T}(\iota_1): \mathsf{T}(A) \to \mathsf{T}\left( A \oplus \mathsf{T}(\mathsf{0}) \right)$. In fact, we can also show that if the fusion operator is of this form then the Hopf monad must also be an idempotent monad.  

\begin{lemma} \label{hcidemp} Let $(\mathsf{T}, \mu, \eta)$ be a Hopf monad on a category $\mathbb{X}$ with finite biproducts. Then the following are equivalent: 
 \begin{enumerate}[{\em (i)}]
 \item $(\mathsf{T}, \mu, \eta)$ is an idempotent monad;
 \item $\mathsf{T}$ preserves zero maps, that is, $\mathsf{T}(0) = 0$;
\item The fusion invertor $\mathsf{h}^\circ_A: \mathsf{T}(A) \to \mathsf{T}\left( A \oplus \mathsf{T}(\mathsf{0}) \right)$ is of the form:
\begin{align}\label{hinvti}
    \mathsf{h}^\circ_A = \mathsf{T}(\iota_1)
\end{align}
 \end{enumerate}
\end{lemma}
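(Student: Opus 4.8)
The plan is to prove the cycle of implications $(i) \Rightarrow (ii) \Rightarrow (iii) \Rightarrow (i)$, using the results already established. The implication $(i) \Rightarrow (ii)$ is exactly Lemma~\ref{idemp0}, so nothing new is needed there. For $(ii) \Rightarrow (iii)$, I would start from the fact that a fusion invertor is unique (the uniqueness lemma), so it suffices to verify that $\mathsf{T}(\iota_1)$ satisfies the three axioms \textbf{[FI.1]}, \textbf{[FI.2]}, \textbf{[FI.3]} under the hypothesis $\mathsf{T}(0) = 0$. Here the key simplification is that when $\mathsf{T}(0)=0$, the object $\mathsf{T}(\mathsf{0})$ is itself a zero object, so $\mu_\mathsf{0}$ and $\eta_{\mathsf{T}(\mathsf{0})}$ and $\mathsf{T}(1_A \oplus \mathsf{T}(0)) = \mathsf{T}(1_A \oplus 0)$ all collapse nicely. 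Concretely: \textbf{[FI.1]} becomes $\mathsf{T}(\pi_1)\circ\mathsf{T}(\iota_1) = \mathsf{T}(\pi_1\circ\iota_1) = \mathsf{T}(1_A) = 1$; \textbf{[FI.2]} becomes $\mu_\mathsf{0}\circ\mathsf{T}(\pi_2)\circ\mathsf{T}(\iota_1) = \mu_\mathsf{0}\circ\mathsf{T}(\pi_2\circ\iota_1) = \mu_\mathsf{0}\circ\mathsf{T}(0) = 0$ (using $\mathsf{T}(0)=0$); and for \textbf{[FI.3]} I would need to show $\mathsf{T}(1_A\oplus\mathsf{T}(0))\circ\mathsf{T}(\iota_1)\circ\mathsf{T}(\pi_1) + \mathsf{T}(\iota_2)\circ\eta_{\mathsf{T}(B)}\circ\mu_B\circ\mathsf{T}(\pi_2) = 1_{\mathsf{T}(A\oplus\mathsf{T}(B))}$, where $\mathsf{T}(1_A\oplus\mathsf{T}(0)) = \mathsf{T}(1_A\oplus 0) = \mathsf{T}(\iota_1\circ\pi_1)$ so the first summand is $\mathsf{T}(\iota_1\circ\pi_1\circ\iota_1\circ\pi_1) = \mathsf{T}(\iota_1\circ\pi_1)$. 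Then I would use that $\mathsf{T}$ applied to the biproduct identity $\iota_1\circ\pi_1 + \iota_2\circ\pi_2 = 1_{A\oplus\mathsf{T}(B)}$ gives $\mathsf{T}(\iota_1\circ\pi_1) + \mathsf{T}(\iota_2\circ\pi_2) = 1$ — wait, this requires $\mathsf{T}$ to be additive on that sum, which is not automatic. So instead I would argue directly: it remains to see that $\mathsf{T}(\iota_2)\circ\eta_{\mathsf{T}(B)}\circ\mu_B\circ\mathsf{T}(\pi_2)$ agrees with $\mathsf{T}(\iota_2\circ\pi_2)$, i.e. that $\mathsf{T}(\iota_2)\circ\eta_{\mathsf{T}(B)}\circ\mu_B = \mathsf{T}(\iota_2)$; but $\mathsf{T}(0)=0$ for a Hopf monad forces idempotency — indeed this is precisely the content of $(ii)\Rightarrow(i)$, so I would prove that implication \emph{first} and then freely use $\eta_{\mathsf{T}(B)}\circ\mu_B = 1$ (equivalently $\mu^{-1} = \eta_{\mathsf{T}}$), which makes the second summand equal $\mathsf{T}(\iota_2)\circ\mathsf{T}(\pi_2) = \mathsf{T}(\iota_2\circ\pi_2)$, and then conclude via \textbf{[FI.3.0]} or a direct biproduct argument inside $\mathsf{T}(A\oplus\mathsf{T}(B))$.

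Reorganizing: the cleanest route is $(i)\Rightarrow(ii)$ (done, Lemma~\ref{idemp0}), then $(ii)\Rightarrow(i)$ using the Hopf structure, then $(i)\Rightarrow(iii)$, then $(iii)\Rightarrow(ii)$ or $(iii)\Rightarrow(i)$. For $(ii)\Rightarrow(i)$, the hypothesis is $\mathsf{T}(0)=0$ and we want $\mu$ invertible with inverse $\eta_\mathsf{T}$. I expect this to follow by applying the fusion isomorphism at a cleverly chosen object: since $\mathsf{T}(0)=0$ gives $\mathsf{T}(\mathsf{0})\cong\mathsf{0}$, the fusion operator $\mathsf{h}_{A,B}\colon \mathsf{T}(A\oplus\mathsf{T}(B))\to\mathsf{T}(A)\oplus\mathsf{T}(B)$ specialized at $A=\mathsf{0}$ reads $\mathsf{h}_{\mathsf{0},B}\colon \mathsf{T}(\mathsf{0}\oplus\mathsf{T}(B)) \to \mathsf{T}(\mathsf{0})\oplus\mathsf{T}(B)$, i.e. $\mathsf{T}\mathsf{T}(B)\to\mathsf{T}(B)$ (identifying $\mathsf{0}\oplus X\cong X$ and $\mathsf{T}(\mathsf{0})\oplus X\cong X$), and tracing through the definition $\mathsf{h}_{\mathsf{0},B} = \iota_1\circ\mathsf{T}(\pi_1) + \iota_2\circ\mu_B\circ\mathsf{T}(\pi_2)$ with $\pi_1$ landing in $\mathsf{T}(\mathsf{0})=\mathsf{0}$, the first summand dies and what survives is exactly $\mu_B$ up to the canonical isomorphisms. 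Hence $\mu_B$ is a natural isomorphism, i.e. $\mathsf{T}$ is idempotent, and then $\mu^{-1} = \eta_\mathsf{T} = \mathsf{T}(\eta)$ by the standard characterization of idempotent monads cited above.

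For $(i)\Rightarrow(iii)$: given idempotency, $\mathsf{T}(0) = 0$ (Lemma~\ref{idemp0}), so $\mathsf{T}(\iota_1)$ is a legitimate candidate; by the uniqueness lemma it is enough to check it is a fusion invertor. \textbf{[FI.1]} and \textbf{[FI.2]} go through as computed above (with $\mathsf{T}(0)=0$). For \textbf{[FI.3]}, with $\mathsf{h}^\circ_A = \mathsf{T}(\iota_1)$ and $\mathsf{T}(1_A\oplus\mathsf{T}(0)) = \mathsf{T}(1_A\oplus 0) = \mathsf{T}(\iota_1\circ\pi_1)$, the left-hand side is $\mathsf{T}(\iota_1\circ\pi_1)\circ\mathsf{T}(\iota_1)\circ\mathsf{T}(\pi_1) + \mathsf{T}(\iota_2)\circ\eta_{\mathsf{T}(B)}\circ\mu_B\circ\mathsf{T}(\pi_2) = \mathsf{T}(\iota_1\circ\pi_1) + \mathsf{T}(\iota_2)\circ\mathsf{T}(\pi_2) = \mathsf{T}(\iota_1\circ\pi_1) + \mathsf{T}(\iota_2\circ\pi_2)$, using $\eta_{\mathsf{T}(B)}\circ\mu_B = 1$ from idempotency. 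To finish I would invoke that $\mathsf{T}$ is additive with respect to \emph{this particular} sum because $\iota_1\circ\pi_1$ and $\iota_2\circ\pi_2$ are the two components of a biproduct and $\mathsf{T}$ of a coproduct diagram together with $\mathsf{T}$ of a product diagram — more carefully, $\mathsf{T}(\iota_1\circ\pi_1) + \mathsf{T}(\iota_2\circ\pi_2)$ equals $1$ because post-composing with $\mathsf{T}(\pi_k)$ and using $\mathsf{T}(\pi_k\circ\iota_j) = \mathsf{T}(\delta_{jk})$ plus $\mathsf{T}(0)=0$ shows both sides have the same components; alternatively this is just \textbf{[FI.3.0]} specialized back. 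Finally $(iii)\Rightarrow(ii)$ (or $(iii)\Rightarrow(i)$): if $\mathsf{h}^\circ_A = \mathsf{T}(\iota_1)$ then \textbf{[FI.2]} reads $\mu_\mathsf{0}\circ\mathsf{T}(\pi_2)\circ\mathsf{T}(\iota_1) = 0$, i.e. $\mu_\mathsf{0}\circ\mathsf{T}(\pi_2\circ\iota_1) = \mu_\mathsf{0}\circ\mathsf{T}(0_{A,\mathsf{0}}) = 0$; to extract $\mathsf{T}(0)=0$ in general I would take $A = \mathsf{0}$ and also use naturality of $\mathsf{h}^\circ$ and \textbf{[FI.1]} to pin down $\mathsf{T}$ on zero maps, then close the loop to $(i)$ via $(ii)\Rightarrow(i)$. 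The main obstacle I anticipate is the careful bookkeeping in $(ii)\Rightarrow(i)$ — verifying that the fusion isomorphism $\mathsf{h}_{\mathsf{0},B}$, under the canonical identifications coming from $\mathsf{T}(\mathsf{0})\cong\mathsf{0}$, really does coincide with $\mu_B$ rather than with $\mu_B$ composed with some extra automorphism — so I would do that identification explicitly via the projections/injections rather than informally.
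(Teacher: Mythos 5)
Your overall architecture is sound, and your argument for $(ii)\Rightarrow(i)$ --- specializing the fusion isomorphism at $A=\mathsf{0}$, noting that $\mathsf{T}(0)=0$ makes $\mathsf{T}(\mathsf{0})$ a zero object so that $\mathsf{h}_{\mathsf{0},B}$ collapses to $\iota_2\circ\mu_B\circ\mathsf{T}(\pi_2)$ with $\iota_2$ and $\mathsf{T}(\pi_2)$ invertible --- is correct and genuinely different from the paper, which instead closes the cycle via $(iii)\Rightarrow(i)$ by first extracting $\mathsf{T}(0)=0$ from \textbf{[FI.2]} and then $\eta_{\mathsf{T}(B)}\circ\mu_B=1$ from \textbf{[FI.3]}. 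The implication $(i)\Rightarrow(ii)$ and your verifications of \textbf{[FI.1]} and \textbf{[FI.2]} for the candidate $\mathsf{T}(\iota_1)$ are also fine.

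The gap is in your verification of \textbf{[FI.3]} for the candidate $\mathsf{T}(\iota_1)$, i.e.\ in the step where everything reduces to $\mathsf{T}(\iota_1\circ\pi_1)+\mathsf{T}(\iota_2\circ\pi_2)=1_{\mathsf{T}(A\oplus\mathsf{T}(B))}$. You rightly flag that $\mathsf{T}$ need not preserve this sum, but neither of your two proposed fixes works as stated. Checking ``components'' after post-composition with $\mathsf{T}(\pi_1)$ and $\mathsf{T}(\pi_2)$ proves equality only if these maps are jointly monic, i.e.\ only if $\mathsf{m}_{A,\mathsf{T}(B)}=\langle \mathsf{T}(\pi_1),\mathsf{T}(\pi_2)\rangle$ is monic, which is not automatic; indeed the identity you are trying to establish is exactly condition (\ref{hopfidemp}), which the subsequent Proposition shows is the nontrivial condition separating Hopf from non-Hopf idempotent monads, so it cannot come for free from the biproduct structure. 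And invoking \textbf{[FI.3.0]} is circular here, since \textbf{[FI.3.0]} is a property of the actual fusion invertor $\mathsf{h}^\circ$, whose identification with $\mathsf{T}(\iota_1)$ is precisely what is at stake. Two repairs are available. The paper's route never verifies the axioms for the candidate: it post-composes \textbf{[FI.3.0]} (valid for the genuine $\mathsf{h}^\circ$ supplied by Proposition \ref{mainprop}) with $\mathsf{T}(\iota_1)$, so that $\pi_2\circ\iota_1=0$ together with $\mathsf{T}(0)=0$ kills the second summand and yields $\mathsf{T}(\iota_1)=\mathsf{h}^\circ_A$ in two lines. Alternatively, your route can be salvaged by first observing that once $\mu_B$ is invertible, $\mathsf{m}_{A,\mathsf{T}(B)}=(1_{\mathsf{T}(A)}\oplus\mu_B^{-1})\circ\mathsf{h}_{A,B}$ is an isomorphism, so $\mathsf{T}(\pi_1)$ and $\mathsf{T}(\pi_2)$ really are jointly monic at objects of the form $A\oplus\mathsf{T}(B)$ and your component check becomes legitimate; or, equivalently, compute directly that $\mathsf{h}_{A,\mathsf{0}}\circ\mathsf{T}(\iota_1)=\iota_1$ (using $\mathsf{T}(0)=0$) and conclude $\mathsf{T}(\iota_1)=\mathsf{h}^{-1}_{A,\mathsf{0}}\circ\iota_1=\mathsf{h}^\circ_A$ from (\ref{hinvtohcirc}). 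As written, the missing justification leaves the central identification unproved.
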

\begin{proof} Observe that $(i) \Rightarrow (ii)$ is simply Lemma \ref{idemp0}. For $(ii) \Rightarrow (iii)$, suppose that $\mathsf{T}$ preserves zero maps. Using \textbf{[FI.3.0]}, we compute that: 
\begin{align*}
\mathsf{T}(\iota_1) &=~ \left(  \mathsf{h}^\circ_A \circ \mathsf{T}(\pi_1) + \mathsf{T}(\iota_2) \circ \eta_{\mathsf{T}(\mathsf{0})} \circ \mu_\mathsf{0} \circ \mathsf{T}(\pi_2) \right) \circ \mathsf{T}(\iota_1)\tag{\textbf{[FI.3.0]}} \\
&=~ \mathsf{h}^\circ_A \circ \mathsf{T}(\pi_1)  \circ \mathsf{T}(\iota_1) + \mathsf{T}(\iota_2) \circ \eta_{\mathsf{T}(\mathsf{0})} \circ \mu_\mathsf{0} \circ \mathsf{T}(\pi_2)  \circ \mathsf{T}(\iota_1) \\
&=~ \mathsf{h}^\circ_A +  \mathsf{T}(\iota_2) \circ \eta_{\mathsf{T}(B)} \circ \mu_B \circ \mathsf{T}(0) \tag{$\pi_2 \circ \iota_1 =0$} \\
&=~ \mathsf{h}^\circ_A + \mathsf{T}(\iota_2) \circ \eta_{\mathsf{T}(B)} \circ \mu_B \circ 0 \tag{By assump. $\mathsf{T}(0) = 0$}\\
&=~ \mathsf{h}^\circ_A + 0 \\
&=~  \mathsf{h}^\circ_A
\end{align*}
So $\mathsf{h}^\circ_A = \mathsf{T}(\iota_1)$. For $(iii) \Rightarrow (i)$, suppose that $\mathsf{h}^\circ_A = \mathsf{T}(\iota_1)$. To prove that $(\mathsf{T}, \mu, \eta)$ is an idempotent monad, it suffices to show that $\eta_{\mathsf{T}(B)} \circ \mu_B = 1_{\mathsf{T}\mathsf{T}(B)}$ \cite[Proposition 4.2.3]{borceux1994handbook}. First using  \textbf{[FI.2]}, we compute that following: 
\begin{align*}
    0 &=~ \mu_B \circ \mathsf{T}(\pi_2) \circ \mathsf{h}^\circ_A \tag{\textbf{[FI.2]}} \\
    &=~ \mu_B \circ \mathsf{T}(\pi_2) \circ \mathsf{T}(\iota_1) \tag{By assump. $\mathsf{h}^\circ_A = \mathsf{T}(\iota_1)$} \\
    &=~ \mu_B \circ \mathsf{T}(0) \tag{$\pi_2 \circ \iota_1 = 0$}\\
    &=~ \mu_B \circ \mathsf{T}\left( \eta_B \circ 0 \right) \\
    &=~ \mu_B \circ \mathsf{T}\left( \eta_B \right) \circ \mathsf{T}(0) \\
    &=~ \mathsf{T}(0) \tag{$\mu \circ \mathsf{T}(\eta) = 1$}
\end{align*}
So $\mathsf{T}(0) = 0$. Then using this and \textbf{[FI.3]}, we compute that: 
\begin{align*}
\mathsf{T}(\iota_2) &=~ \left( \mathsf{T}\left(1_A \oplus \mathsf{T}(0) \right) \circ \mathsf{h}^\circ_A \circ \mathsf{T}(\pi_1) + \mathsf{T}(\iota_2) \circ \eta_{\mathsf{T}(B)} \circ \mu_B \circ \mathsf{T}(\pi_2) \right) \circ \mathsf{T}(\iota_2)\tag{\textbf{[FI.3]}} \\
&=~ \left( \mathsf{T}\left(1_A \oplus \mathsf{T}(0) \right) \circ \mathsf{T}(\iota_1) \circ \mathsf{T}(\pi_1) + \mathsf{T}(\iota_2) \circ \eta_{\mathsf{T}(B)} \circ \mu_B \circ \mathsf{T}(\pi_2) \right) \circ \mathsf{T}(\iota_2)\tag{By assump. $\mathsf{h}^\circ_A = \mathsf{T}(\iota_1)$} \\
&=~ \left(  \mathsf{T}(\iota_1) \circ \mathsf{T}(\pi_1) + \mathsf{T}(\iota_2) \circ \eta_{\mathsf{T}(B)} \circ \mu_B \circ \mathsf{T}(\pi_2) \right) \circ \mathsf{T}(\iota_2)\tag{Nat. of $\iota_1$} \\
&=~ \mathsf{T}(\iota_1) \circ \mathsf{T}(\pi_1)  \circ \mathsf{T}(\iota_2) + \mathsf{T}(\iota_2) \circ \eta_{\mathsf{T}(B)} \circ \mu_B \circ \mathsf{T}(\pi_2)  \circ \mathsf{T}(\iota_2) \\
&=~ \mathsf{T}(\iota_1) \circ \mathsf{T}(0) +  \mathsf{T}(\iota_2) \circ \eta_{\mathsf{T}(B)} \circ \mu_B \tag{$\pi_1 \circ \iota_2 =0$} \\
&=~ \mathsf{T}(\iota_1) \circ 0 +  \mathsf{T}(\iota_2) \circ \eta_{\mathsf{T}(B)} \circ \mu_B \tag{$\mathsf{T}(0) = 0$} \\
&=~ 0 +  \mathsf{T}(\iota_2) \circ \eta_{\mathsf{T}(B)} \circ \mu_B \\
&=~ \mathsf{T}(\iota_2) \circ \eta_{\mathsf{T}(B)} \circ \mu_B 
\end{align*}
So we have that $\mathsf{T}(\iota_2) = \mathsf{T}(\iota_2) \circ \eta_{\mathsf{T}(B)} \circ \mu_B$. Post-composing by $\mathsf{T}(\pi_2)$, since $\pi_2 \circ \iota_2 = 1$, we obtain $\eta_{\mathsf{T}(B)} \circ \mu_B = 1_{\mathsf{T}\mathsf{T}(B)}$. Therefore we can conclude that $(\mathsf{T}, \mu, \eta)$ is an idempotent monad. So we conclude that $(i) \Leftrightarrow (ii) \Leftrightarrow (iii)$. 
\end{proof}

It follows that for idempotent Hopf monads, the inverse of the fusion operator is always of the same form as well. 

\begin{corollary} Let $(\mathsf{T}, \mu, \eta)$ be an idempotent Hopf monad on a category $\mathbb{X}$ with finite biproducts. Then the inverse of the fusion operator $\mathsf{h}^{-1}_{A,B}: \mathsf{T}(A) \oplus \mathsf{T}(B) \to \mathsf{T}\left( A \oplus \mathsf{T}(B) \right)$ is of the form:
\begin{align}\label{hidemhinv}
  \mathsf{h}^{-1}_{A,B} = \mathsf{T}(\iota_1) \circ \pi_2 + \mathsf{T}(\iota_2) \circ \eta_{\mathsf{T}(B)} \circ \pi_2
\end{align}
\end{corollary}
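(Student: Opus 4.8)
The plan is to combine Proposition~\ref{mainprop}(ii) with the explicit form of the fusion invertor for idempotent Hopf monads obtained in Lemma~\ref{hcidemp}. Since $(\mathsf{T}, \mu, \eta)$ is a Hopf monad, it has a fusion invertor $\mathsf{h}^\circ$, and by Lemma~\ref{hcidemp} the idempotency hypothesis forces $\mathsf{h}^\circ_A = \mathsf{T}(\iota_1)$. So the only real work is to substitute $\mathsf{h}^\circ_A = \mathsf{T}(\iota_1)$ into formula~(\ref{hcirctohinv}) from Proposition~\ref{mainprop} and simplify.

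First I would write down~(\ref{hcirctohinv}): $\mathsf{h}^{-1}_{A,B} = \mathsf{T}\left(1_A \oplus \mathsf{T}(0) \right) \circ \mathsf{h}^\circ_A \circ \pi_1 + \mathsf{T}(\iota_2) \circ \eta_{\mathsf{T}(B)} \circ \pi_2$. The second summand is already in the desired form, so attention goes to the first summand. Substituting $\mathsf{h}^\circ_A = \mathsf{T}(\iota_1)$ gives $\mathsf{T}\left(1_A \oplus \mathsf{T}(0) \right) \circ \mathsf{T}(\iota_1) \circ \pi_1$. By functoriality of $\mathsf{T}$ this is $\mathsf{T}\left( (1_A \oplus \mathsf{T}(0)) \circ \iota_1 \right) \circ \pi_1$, and since $\iota_1: A \to A \oplus \mathsf{T}(\mathsf{0})$ precomposed with the coproduct map $1_A \oplus \mathsf{T}(0)$ is just $\iota_1: A \to A \oplus \mathsf{T}(B)$ (the first injection is natural in the relevant sense, i.e.\ $(1_A \oplus \mathsf{T}(0)) \circ \iota_1 = \iota_1$), this collapses to $\mathsf{T}(\iota_1) \circ \pi_1$. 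Hence $\mathsf{h}^{-1}_{A,B} = \mathsf{T}(\iota_1) \circ \pi_1 + \mathsf{T}(\iota_2) \circ \eta_{\mathsf{T}(B)} \circ \pi_2$, which is exactly~(\ref{hidemhinv}).

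There is no real obstacle here; the statement is a routine corollary once Lemma~\ref{hcidemp} is in hand. The only point to be slightly careful about is the identification $(1_A \oplus \mathsf{T}(0)) \circ \iota_1 = \iota_1$, which is immediate from the universal property of the coproduct (or equivalently from the biproduct identities $\pi_1 \circ (1_A \oplus \mathsf{T}(0)) = 1_A \circ \pi_1$ and $\pi_2 \circ (1_A \oplus \mathsf{T}(0)) = \mathsf{T}(0) \circ \pi_2$, together with $\pi_1 \circ \iota_1 = 1$, $\pi_2 \circ \iota_1 = 0$). I note also that the $\pi_2$ appearing in the first summand of~(\ref{hidemhinv}) as stated in the excerpt is evidently a typo for $\pi_1$, matching what the substitution produces; I would write the corrected formula $\mathsf{h}^{-1}_{A,B} = \mathsf{T}(\iota_1) \circ \pi_1 + \mathsf{T}(\iota_2) \circ \eta_{\mathsf{T}(B)} \circ \pi_2$ in the proof.
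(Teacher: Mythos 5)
Your proof is correct and takes essentially the same route as the paper: invoke Lemma~\ref{hcidemp} to get $\mathsf{h}^\circ_A = \mathsf{T}(\iota_1)$, substitute into (\ref{hcirctohinv}), and absorb the factor $\mathsf{T}\left(1_A \oplus \mathsf{T}(0)\right)$ (you do this via $(1_A \oplus \mathsf{T}(0)) \circ \iota_1 = \iota_1$, the paper via $\mathsf{T}(0)=0$; same outcome). You are also right that the $\pi_2$ in the first summand of (\ref{hidemhinv}) is a typo for $\pi_1$, since $\mathsf{T}(\iota_1) \circ \pi_2$ does not even type-check.
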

\begin{proof} By Lemma \ref{hcidemp}, we have that $\mathsf{T}(0) = 0$. This means that for the zero object, we have that $\mathsf{T}\left(1_A \oplus \mathsf{T}(0) \right) = 1_{\mathsf{T}\left( A \oplus \mathsf{T}(\mathsf{0}) \right)}$. By Lemma \ref{hcidemp}, we also have that $\mathsf{h}^\circ_A = \mathsf{T}(\iota_1)$. So by Proposition \ref{mainprop}, applying the construction of (\ref{hcirctohinv}) and rewriting with the previous identities in mind, we obtain precisely (\ref{hidemhinv}). 
\end{proof}

Of course, for any monad $(\mathsf{T}, \mu, \eta)$ on any category with finite biproducts, the map $\mathsf{T}(\iota_1): \mathsf{T}(A) \to \mathsf{T}\left( A \oplus \mathsf{T}(\mathsf{0}) \right)$ can always be defined. However, while $\mathsf{T}(\iota_1)$ always satisfies \textbf{[FI.1]} and \textbf{[FI.2]}, $\mathsf{T}(\iota_1)$ will not in general also satisfy \textbf{[FI.3]}. So checking that an idempotent monad is a Hopf monad amounts to checking that $\mathsf{T}(\iota_1)$ satisfies \textbf{[FI.3]}. It turns out that for an idempotent monad, \textbf{[FI.3]} can be simplified even further. Therefore, checking that an idempotent monad is a Hopf monad is reduced to checking one identity. 

\begin{proposition} Let $(\mathsf{T}, \mu, \eta)$ be an idempotent monad on a category $\mathbb{X}$ with finite biproducts. Then $(\mathsf{T}, \mu, \eta)$ is a Hopf monad if and only if for all pairs of objects $A, B \in \mathbb{X}$ the following equality holds:
\begin{align} \label{hopfidemp}
    \mathsf{T}\left( 1_A \oplus 0 \right) + \mathsf{T}(0 \oplus 1_{\mathsf{T}(B)})= 1_{\mathsf{T}\left(A \oplus \mathsf{T}(B) \right) }
\end{align}
\end{proposition}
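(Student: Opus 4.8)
The plan is to invoke the characterization from Lemma \ref{hcidemp}: since $(\mathsf{T}, \mu, \eta)$ is idempotent, it preserves zero maps, so $\mathsf{T}(0) = 0$, and the only possible fusion invertor is $\mathsf{h}^\circ_A = \mathsf{T}(\iota_1)$. By Proposition \ref{mainprop}, the monad is a Hopf monad if and only if this $\mathsf{h}^\circ$ is an actual fusion invertor, i.e. satisfies \textbf{[FI.1]}, \textbf{[FI.2]}, \textbf{[FI.3]}. First I would observe that $\mathsf{T}(\iota_1)$ automatically satisfies \textbf{[FI.1]} (since $\pi_1 \circ \iota_1 = 1$, so $\mathsf{T}(\pi_1) \circ \mathsf{T}(\iota_1) = \mathsf{T}(1) = 1$) and \textbf{[FI.2]} (since $\pi_2 \circ \iota_1 = 0$, so $\mathsf{T}(\pi_2) \circ \mathsf{T}(\iota_1) = \mathsf{T}(0) = 0$, and then post-composing by $\mu_{\mathsf{0}}$ gives $0$). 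Hence the only condition left to analyze is \textbf{[FI.3]}, which with $\mathsf{h}^\circ_A = \mathsf{T}(\iota_1)$ reads
\[
\mathsf{T}\!\left(1_A \oplus \mathsf{T}(0)\right) \circ \mathsf{T}(\iota_1) \circ \mathsf{T}(\pi_1) + \mathsf{T}(\iota_2) \circ \eta_{\mathsf{T}(B)} \circ \mu_B \circ \mathsf{T}(\pi_2) = 1_{\mathsf{T}(A \oplus \mathsf{T}(B))}.
\]

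Next I would simplify the left-hand side using idempotence. Since $\mathsf{T}(0) = 0$ (Lemma \ref{idemp0}), the functoriality of $\mathsf{T}$ gives $\mathsf{T}(1_A \oplus \mathsf{T}(0)) \circ \mathsf{T}(\iota_1) = \mathsf{T}\big((1_A \oplus \mathsf{T}(0)) \circ \iota_1\big) = \mathsf{T}(\iota_1 \circ 1_A) = \mathsf{T}(\iota_1)$, so the first summand is $\mathsf{T}(\iota_1) \circ \mathsf{T}(\pi_1) = \mathsf{T}(\iota_1 \circ \pi_1)$. For the second summand, the defining property of an idempotent monad is $\eta_{\mathsf{T}(B)} \circ \mu_B = 1_{\mathsf{T}\mathsf{T}(B)}$ (as used in the proof of Lemma \ref{hcidemp}), so $\mathsf{T}(\iota_2) \circ \eta_{\mathsf{T}(B)} \circ \mu_B \circ \mathsf{T}(\pi_2) = \mathsf{T}(\iota_2) \circ \mathsf{T}(\pi_2) = \mathsf{T}(\iota_2 \circ \pi_2)$. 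Therefore \textbf{[FI.3]} becomes $\mathsf{T}(\iota_1 \circ \pi_1) + \mathsf{T}(\iota_2 \circ \pi_2) = 1_{\mathsf{T}(A \oplus \mathsf{T}(B))}$.

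Finally I would identify this with (\ref{hopfidemp}). Note $\iota_1 \circ \pi_1 : A \oplus \mathsf{T}(B) \to A \oplus \mathsf{T}(B)$ equals $1_A \oplus 0$ (the map that is the identity on the first summand and zero on the second), and $\iota_2 \circ \pi_2$ equals $0 \oplus 1_{\mathsf{T}(B)}$; these are just the two biproduct idempotents. Substituting these gives exactly $\mathsf{T}(1_A \oplus 0) + \mathsf{T}(0 \oplus 1_{\mathsf{T}(B)}) = 1_{\mathsf{T}(A \oplus \mathsf{T}(B))}$, which is (\ref{hopfidemp}). So \textbf{[FI.3]} for $\mathsf{h}^\circ = \mathsf{T}(\iota_1)$ is literally equation (\ref{hopfidemp}), and by Proposition \ref{mainprop} together with Lemma \ref{hcidemp} the monad is a Hopf monad precisely when this holds. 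The main (mild) obstacle is bookkeeping: being careful that $\mathsf{T}(\iota_1)$ genuinely verifies \textbf{[FI.1]} and \textbf{[FI.2]} unconditionally for an idempotent monad (so that no extra hypothesis sneaks in), and tracking the rewriting $\mathsf{T}(1_A \oplus \mathsf{T}(0)) \circ \mathsf{T}(\iota_1) = \mathsf{T}(\iota_1)$, which crucially uses $\mathsf{T}(0) = 0$ — i.e. that idempotence is what collapses the $\mathsf{T}\left(1_A \oplus \mathsf{T}(0)\right)$ prefactor appearing in the general formula.
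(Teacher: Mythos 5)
Your proposal is correct and follows essentially the same route as the paper: reduce to checking that $\mathsf{T}(\iota_1)$ is the (unique possible) fusion invertor via Lemma \ref{hcidemp} and Proposition \ref{mainprop}, note that \textbf{[FI.1]} and \textbf{[FI.2]} hold automatically under idempotence, and identify \textbf{[FI.3]} with equation (\ref{hopfidemp}) using $\eta_{\mathsf{T}(B)} \circ \mu_B = 1$ and $\iota_1 \circ \pi_1 + \iota_2 \circ \pi_2$ being the biproduct idempotents. One tiny quibble: the collapse $\mathsf{T}\left(1_A \oplus \mathsf{T}(0)\right) \circ \mathsf{T}(\iota_1) = \mathsf{T}(\iota_1)$ follows from naturality of $\iota_1$ alone (as the paper notes) and does not actually need $\mathsf{T}(0) = 0$, contrary to your closing remark.
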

\begin{proof} Suppose that  $(\mathsf{T}, \mu, \eta)$ is a Hopf monad. By Lemma \ref{hcidemp}, we have that the fusion invertor is of the form $\mathsf{h}^\circ_A = \mathsf{T}(\iota_1)$. Using \textbf{[FI.3]}, we compute: 
\begin{align*}
  \mathsf{T}\left( 1_A \oplus 0 \right) + \mathsf{T}(0 \oplus 1_B) &=~ \mathsf{T}(\iota_1) \circ \mathsf{T}(\pi_1) +  \mathsf{T}(\iota_2) \circ \mathsf{T}(\pi_2) \tag{$\iota_1 \circ \pi_1 = 1 \oplus 0$ and $\iota_2 \circ \pi_2 = 0 \oplus 1$} \\
&=~ \mathsf{T}\left(1_A \oplus \mathsf{T}(0) \right) \circ \mathsf{T}(\iota_1) \circ \mathsf{T}(\pi_1) +  \mathsf{T}(\iota_2) \circ \mathsf{T}(\pi_2) \tag{Nat. of $\iota_1$} \\
&=~  \mathsf{T}\left(1_A \oplus \mathsf{T}(0) \right) \circ \mathsf{h}^\circ_A \circ \mathsf{T}(\pi_1) +  \mathsf{T}(\iota_2) \circ \mathsf{T}(\pi_2) \tag{\ref{hinvti}} \\
  &=~  \mathsf{T}\left(1_A \oplus \mathsf{T}(0) \right) \circ \mathsf{h}^\circ_A \circ \mathsf{T}(\pi_1) +  \mathsf{T}(\iota_2) \circ \eta_{\mathsf{T}(B)} \circ \mu_B \circ  \mathsf{T}(\pi_2) \tag{Idem. monad so $\eta \circ \mu = 1$} \\
  &=~  1_{\mathsf{T}\left( A \oplus \mathsf{T}(B) \right)} \tag{\textbf{[FI.3]}}
\end{align*}
So the desired equality (\ref{hopfidemp}) holds. 

Conversely, suppose that (\ref{hopfidemp}) holds. Define the natural transformation $\mathsf{h}^\circ_A: \mathsf{T}(A) \to \mathsf{T}\left( A \oplus \mathsf{T}(\mathsf{0}) \right)$ as $\mathsf{h}^\circ_A = \mathsf{T}(\iota_1)$. We will now show that $\mathsf{h}^\circ_A$ satisfies the three identities \textbf{[FI.1]}, \textbf{[FI.2]}, and \textbf{[FI.3]}. So we compute: 
    \begin{enumerate}[{\em (i)}] \item $\mathsf{T}(\pi_1) \circ \mathsf{h}^\circ_A = 1_{\mathsf{T}(A)}$
\begin{align*}
\mathsf{T}(\pi_1) \circ \mathsf{h}^\circ_A &=~ \mathsf{T}(\pi_1) \circ \mathsf{T}(\iota_1) \tag{Def. of $\mathsf{h}^\circ$} \\
&=~  1_{\mathsf{T}(A)} \tag{$\pi_1 \circ \iota_1 = 1$}
\end{align*}
\item $\mu_B \circ \mathsf{T}(\pi_2) \circ \mathsf{h}^\circ_A = 0$
\begin{align*}
    \mu_B \circ \mathsf{T}(\pi_2) \circ \mathsf{h}^\circ_A &=~ \mu_B \circ \mathsf{T}(\pi_2) \circ \mathsf{T}(\iota_1)  \tag{Def. of $\mathsf{h}^\circ$} \\
    &=~ \mu_B \circ \mathsf{T}(0) \tag{$\pi_2 \circ \iota_1 =0$}\\
    &=~ \mu_B \circ 0 \tag{Lem. \ref{idemp0}} \\ 
    &=~ 0 
\end{align*}
\item $\mathsf{T}\left(1_A \oplus \mathsf{T}(0) \right) \circ \mathsf{h}^\circ_A \circ \mathsf{T}(\pi_1) +  \mathsf{T}(\iota_2) \circ \eta_{\mathsf{T}(B)} \circ \mu_B \circ  \mathsf{T}(\pi_2)  = 1_{\mathsf{T}\left( A \oplus \mathsf{T}(B) \right)}$
\begin{align*}
&\mathsf{T}\left(1_A \oplus \mathsf{T}(0) \right) \circ \mathsf{h}^\circ_A \circ \mathsf{T}(\pi_1) +  \mathsf{T}(\iota_2) \circ \eta_{\mathsf{T}(B)} \circ \mu_B \circ  \mathsf{T}(\pi_2) \\
&=~ \mathsf{T}\left(1_A \oplus \mathsf{T}(0) \right) \circ \mathsf{h}^\circ_A \circ \mathsf{T}(\pi_1) +  \mathsf{T}(\iota_2) \circ  \mathsf{T}(\pi_2) \tag{Idem. monad so $\eta \circ \mu = 1$} \\
&=~ \mathsf{T}\left(1_A \oplus \mathsf{T}(0) \right) \circ \mathsf{T}(\iota_1) \circ \mathsf{T}(\pi_1) + \mathsf{T}(\iota_2) \circ  \mathsf{T}(\pi_2) \tag{Def. of $\mathsf{h}^\circ$}  \\
&=~ \mathsf{T}(\iota_1) \circ \mathsf{T}(\pi_1) + \mathsf{T}(\iota_2) \circ \mathsf{T}(\pi_2) \tag{Nat. of $\iota_1$} \\
&=~  \mathsf{T}\left( 1_A \oplus 0 \right) + \mathsf{T}(0 \oplus 1_B) \tag{$\iota_1 \circ \pi_1 = 1 \oplus 0$ and $\iota_2 \circ \pi_2 = 0 \oplus 1$} \\
&=~ 1_{\mathsf{T}\left( A \oplus \mathsf{T}(B) \right)} \tag{\ref{hopfidemp}}
\end{align*}
\end{enumerate}
Therefore, $\mathsf{h}^\circ$ is a fusion invertor for $(\mathsf{T}, \mu, \eta)$. Then by Proposition \ref{mainprop}, $(\mathsf{T}, \mu, \eta)$ is a Hopf monad. 
\end{proof}

\section{Negatives}

In this section, we consider Hopf monads on a category that has finite biproducts and also additive negatives (also sometimes called an additive category). We will show that in a setting with negatives, for any Hopf monad, its fusion invertor and inverse of the fusion operator are always of the same form. We will then show that in the presence of negatives, checking that a monad is a Hopf monad is simplified to checking one identity. 

In a category $\mathbb{X}$ with finite biproducts that also has negatives, every homset is an Abelian group, which means that every map $f: A \to B$ has an additive inverse $-f: A \to B$, that is, $f + (-f) = 0$. First observe that this implies that every object is a Hopf monoid, which means every object induces a Hopf monad. 

\begin{lemma}In a category $\mathbb{X}$ with finite biproducts that also has negatives, for every object $H$, the monad $\mathsf{T}(-) := H \oplus -$, as defined in Section \ref{sec:representable}, is a Hopf monad. 
\end{lemma}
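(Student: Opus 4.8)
The plan is to invoke the result from Section~\ref{sec:representable} which already establishes that in a category $\mathbb{X}$ with finite biproducts, an object $H$ is a Hopf monoid precisely when its identity $1_H$ has an additive inverse $-1_H$, and that in this case $\mathsf{T}(-) := H \oplus -$ is a Hopf monad with the fusion invertor given by formula~(\ref{H+hinvdef}). So the bulk of the work is simply observing that the hypothesis of that lemma is automatically satisfied here: in a category with negatives, \emph{every} homset is an Abelian group, so in particular $1_A \colon A \to A$ has an additive inverse $-1_A \colon A \to A$ with $1_A + (-1_A) = 0$.

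Concretely, I would proceed as follows. First, recall that since $\mathbb{X}$ has negatives, the identity map $1_A$ on any object $A$ has an additive inverse $-1_A$. Second, apply the lemma from Section~\ref{sec:representable} with $H := A$: since $1_A$ has an additive inverse, $\mathsf{T}(-) := A \oplus -$ is a Hopf monad, with monad structure given by~(\ref{Hmonad}), fusion operator as displayed in that section, and fusion invertor $\mathsf{h}^\circ_X = \iota_1 \circ \pi_1 + \iota_2 \circ \pi_1 + \iota_3 \circ (-1_A) \circ \pi_1$ obtained by specializing~(\ref{H+hinvdef}). That is all that is needed.

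There is essentially no obstacle here — the statement is a direct corollary of the earlier development, and the only content is the trivial remark that ``has negatives'' guarantees the needed additive inverse. If one wanted to be fully self-contained, the mildest point of care would be to double-check that the additive inverse $-1_A$ is indeed the antipode for the canonical bicommutative bimonoid structure on $A$ (codiagonal for multiplication, diagonal for comultiplication, zero maps for unit and counit), i.e.\ that $1_A + (-1_A) = 0$ is exactly the antipode equation in this setting; but this is precisely what was already noted in Section~\ref{sec:representable} and requires no new argument.
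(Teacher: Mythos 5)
Your proposal is correct and matches the paper's intended argument exactly: the paper states this lemma without proof, immediately after remarking that negatives make every object a Hopf monoid, so the content is precisely your observation that $1_A$ has an additive inverse and the lemma from Section~\ref{sec:representable} applies with $H := A$. Nothing further is needed.
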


Turning our attention from representable Hopf monads to arbitrary Hopf monads, we will now explain how using negatives, one can show that fusion invertor is always of a specific form. 

\begin{lemma}\label{hcircneg} Let $(\mathsf{T}, \mu, \eta)$ be a Hopf monad on a category $\mathbb{X}$ with finite biproducts that also has negatives. Then the fusion invertor $\mathsf{h}^\circ_A: \mathsf{T}(A) \to \mathsf{T}\left( A \oplus \mathsf{T}(\mathsf{0}) \right)$ is of the following form: 
\begin{align}\label{hneghcirc}
    \mathsf{h}^\circ_A = \mathsf{T}(\iota_1) - \mathsf{T}(\iota_2) \circ \eta_{\mathsf{T}(\mathsf{0})} \circ \mathsf{T}(0) 
\end{align}
and the inverse of the fusion operator $\mathsf{h}^{-1}_{A,B}: \mathsf{T}(A) \oplus \mathsf{T}(B) \to \mathsf{T}\left( A \oplus \mathsf{T}(B) \right)$ is of the form:
\begin{align}\label{hneghinv}
  \mathsf{h}^{-1}_{A,B} = \mathsf{T}(\iota_1) \circ \pi_1  -  \mathsf{T}(\iota_2) \circ \eta_{\mathsf{T}(B)} \circ \mathsf{T}(0) \circ \pi_1 + \mathsf{T}(\iota_2) \circ \eta_{\mathsf{T}(B)} \circ \pi_2
\end{align}
\end{lemma}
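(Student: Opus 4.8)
The plan is to derive formula (\ref{hneghcirc}) for the fusion invertor from the characterization we already have, and then feed the result into Proposition \ref{mainprop} to obtain (\ref{hneghinv}) essentially for free. The starting point is the identity \textbf{[FI.3.0]}, which holds for any monad equipped with a fusion invertor on a category with finite biproducts. Since $\mathbb{X}$ has negatives, the term $\mathsf{T}(\iota_2) \circ \eta_{\mathsf{T}(\mathsf{0})} \circ \mu_\mathsf{0} \circ \mathsf{T}(\pi_2)$ in \textbf{[FI.3.0]} can be moved to the other side as a subtraction, giving
\[
\mathsf{h}^\circ_A \circ \mathsf{T}(\pi_1) = 1_{\mathsf{T}\left( A \oplus \mathsf{T}(\mathsf{0}) \right)} - \mathsf{T}(\iota_2) \circ \eta_{\mathsf{T}(\mathsf{0})} \circ \mu_\mathsf{0} \circ \mathsf{T}(\pi_2).
\]
Now I precompose both sides with $\mathsf{T}(\iota_1): \mathsf{T}(A) \to \mathsf{T}\left(A \oplus \mathsf{T}(\mathsf{0})\right)$. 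On the left, $\mathsf{T}(\pi_1) \circ \mathsf{T}(\iota_1) = \mathsf{T}(\pi_1 \circ \iota_1) = \mathsf{T}(1_A) = 1_{\mathsf{T}(A)}$, so the left side collapses to $\mathsf{h}^\circ_A$. On the right, the first term becomes $\mathsf{T}(\iota_1)$, and in the second term we have $\mu_\mathsf{0} \circ \mathsf{T}(\pi_2) \circ \mathsf{T}(\iota_1) = \mu_\mathsf{0} \circ \mathsf{T}(\pi_2 \circ \iota_1) = \mu_\mathsf{0} \circ \mathsf{T}(0)$, and $\mathsf{T}(0)$ here is the composite $\mathsf{T}(0: A \to \mathsf{0} \to \mathsf{T}(\mathsf{0}))$; after rewriting $\mathsf{T}(0) = \mathsf{T}(\eta_\mathsf{0} \circ 0) = \mathsf{T}(\eta_\mathsf{0}) \circ \mathsf{T}(0)$ and using $\mu_\mathsf{0} \circ \mathsf{T}(\eta_\mathsf{0}) = 1$, this simplifies so that the second term is exactly $\mathsf{T}(\iota_2) \circ \eta_{\mathsf{T}(\mathsf{0})} \circ \mathsf{T}(0)$ (with $\mathsf{T}(0): \mathsf{T}(A) \to \mathsf{T}\mathsf{T}(\mathsf{0})$). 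This yields (\ref{hneghcirc}). One must be a little careful tracking which $0$-map is meant at each occurrence and which domain/codomain $\mathsf{T}(0)$ carries, since the notation $\mathsf{T}(0)$ is overloaded; this bookkeeping is the one place where care is needed, though it is not conceptually difficult.

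With (\ref{hneghcirc}) in hand, formula (\ref{hneghinv}) follows by substituting into the general expression (\ref{hcirctohinv}) from Proposition \ref{mainprop},
\[
\mathsf{h}^{-1}_{A,B} = \mathsf{T}\left(1_A \oplus \mathsf{T}(0) \right) \circ \mathsf{h}^\circ_A \circ \pi_1 + \mathsf{T}(\iota_2) \circ \eta_{\mathsf{T}(B)} \circ \pi_2,
\]
and distributing $\mathsf{T}\left(1_A \oplus \mathsf{T}(0)\right)$ over the difference defining $\mathsf{h}^\circ_A$. For the first summand, $\mathsf{T}\left(1_A \oplus \mathsf{T}(0)\right) \circ \mathsf{T}(\iota_1) = \mathsf{T}\left((1_A \oplus \mathsf{T}(0)) \circ \iota_1\right) = \mathsf{T}(\iota_1)$ by naturality of $\iota_1$ (the $\iota_1$ on the right now lands in $A \oplus \mathsf{T}(B)$), giving the term $\mathsf{T}(\iota_1) \circ \pi_1$. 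For the second summand, one computes $\mathsf{T}\left(1_A \oplus \mathsf{T}(0)\right) \circ \mathsf{T}(\iota_2) \circ \eta_{\mathsf{T}(\mathsf{0})} \circ \mathsf{T}(0)$; using naturality of $\iota_2$ and naturality of $\eta$ to slide $\mathsf{T}(0): \mathsf{T}(\mathsf{0}) \to \mathsf{T}(B)$ past $\eta$, this equals $\mathsf{T}(\iota_2) \circ \eta_{\mathsf{T}(B)} \circ \mathsf{T}(0) \circ \pi_1$, which, appearing with a minus sign, gives the middle term of (\ref{hneghinv}). Adding the trailing $\mathsf{T}(\iota_2) \circ \eta_{\mathsf{T}(B)} \circ \pi_2$ completes (\ref{hneghinv}).

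The main obstacle, to the extent there is one, is not any deep step but rather keeping the several overloaded $\mathsf{T}(0)$'s straight: in (\ref{hneghcirc}) the relevant zero map is $0: A \to \mathsf{T}(\mathsf{0})$ pushed through $\mathsf{T}$ (or, after the naturality rewrite, $\mathsf{T}(0): \mathsf{T}(A) \to \mathsf{T}\mathsf{T}(\mathsf{0})$), whereas in (\ref{hneghinv}) it is $\mathsf{T}(0): \mathsf{T}(\mathsf{0}) \to \mathsf{T}(B)$ used to fill the second component, and these are conflated in the displayed formulas only because composition with projections makes the types match up. Once the types are pinned down, every equality used is either a biproduct identity ($\pi_i \circ \iota_j = \delta_{ij}$), functoriality of $\mathsf{T}$, naturality of $\iota_1$, $\iota_2$, or $\eta$, or the already-established \textbf{[FI.3.0]}, so the proof is a short computation. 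I would present it as: (1) rewrite \textbf{[FI.3.0]} using negatives and precompose with $\mathsf{T}(\iota_1)$ to get (\ref{hneghcirc}); (2) substitute (\ref{hneghcirc}) into (\ref{hcirctohinv}) and simplify each term to get (\ref{hneghinv}).
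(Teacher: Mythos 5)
Your proposal is correct and follows essentially the same route as the paper: both derive (\ref{hneghcirc}) by precomposing \textbf{[FI.3.0]} with $\mathsf{T}(\iota_1)$, simplifying $\mu_\mathsf{0}\circ\mathsf{T}(\pi_2)\circ\mathsf{T}(\iota_1)$ via $\mu\circ\mathsf{T}(\eta)=1$, and cancelling with a negative, and then obtain (\ref{hneghinv}) by substituting into (\ref{hcirctohinv}) and using naturality of $\iota_1$, $\iota_2$, and $\eta$. The only difference is presentational (you move the extra term across the equation before precomposing rather than after), and your care about the overloaded $\mathsf{T}(0)$'s matches the bookkeeping the paper does implicitly.
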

\begin{proof} Using \textbf{[FI.3.0]}, we first compute that: 
\begin{align*}
\mathsf{T}(\iota_1) &=~ \left( \mathsf{h}^\circ_A \circ \mathsf{T}(\pi_1) + \mathsf{T}(\iota_2) \circ \eta_{\mathsf{T}(\mathsf{0})} \circ \mu_\mathsf{0} \circ \mathsf{T}(\pi_2) \right) \circ \mathsf{T}(\iota_1)\tag{\textbf{[FI.3]}} \\
&=~ \mathsf{h}^\circ_A \circ \mathsf{T}(\pi_1)  \circ \mathsf{T}(\iota_1) + \mathsf{T}(\iota_2) \circ \eta_{\mathsf{T}(\mathsf{0})} \circ \mu_\mathsf{0} \circ \mathsf{T}(\pi_2)  \circ \mathsf{T}(\iota_1) \\
&=~ \mathsf{h}^\circ_A +  \mathsf{T}(\iota_2) \circ \eta_{\mathsf{T}(\mathsf{0})} \circ \mu_\mathsf{0} \circ \mathsf{T}(0) \tag{$\pi_1 \circ \iota_1 = 1$ and $\pi_2 \circ \iota_1 = 0$}\\
&=~ \mathsf{h}^\circ_A +  \mathsf{T}(\iota_2) \circ \eta_{\mathsf{T}(\mathsf{0})} \circ \mu_\mathsf{0} \circ \mathsf{T}\left( \eta_\mathsf{0} \circ 0 \right) \\
&=~ \mathsf{h}^\circ_A +  \mathsf{T}(\iota_2) \circ \eta_{\mathsf{T}(\mathsf{0})} \circ \mu_\mathsf{0} \circ \mathsf{T}\left( \eta_\mathsf{0} \right) \circ  \mathsf{T}(0) \\
&=~ \mathsf{h}^\circ_A +  \mathsf{T}(\iota_2) \circ \eta_{\mathsf{T}(\mathsf{0})} \circ \mathsf{T}(0) \tag{$\mu \circ \mathsf{T}(\eta) = 1$}
\end{align*} 
So we have that $\mathsf{h}^\circ_A +  \mathsf{T}(\iota_2) \circ \eta_{\mathsf{T}(\mathsf{0})} \circ \mathsf{T}(0) =\mathsf{T}(\iota_1)$. By subtracting $\mathsf{T}(\iota_2) \circ \eta_{\mathsf{T}(B)} \circ \mathsf{T}(0)$ from both sides, we finally obtain that $\mathsf{h}^\circ_A = \mathsf{T}(\iota_1) - \mathsf{T}(\iota_2) \circ \eta_{\mathsf{T}(B)} \circ \mathsf{T}(0)$ as desired. So by Proposition \ref{mainprop}, applying the construction of (\ref{hcirctohinv}), we compute:
\begin{align*}
    \mathsf{h}^{-1}_{A,B} &=~ \mathsf{T}\left(1_A \oplus \mathsf{T}(0) \right) \circ \mathsf{h}^\circ_A \circ \pi_1 + \mathsf{T}(\iota_2) \circ \eta_{\mathsf{T}(B)} \circ \pi_2 \tag{\ref{hcirctohinv}} \\
    &=~ \mathsf{T}\left(1_A \oplus \mathsf{T}(0) \right) \circ \left(  \mathsf{T}(\iota_1) - \mathsf{T}(\iota_2) \circ \eta_{\mathsf{T}(B)} \circ \mathsf{T}(0) \right) \circ \pi_1 + \mathsf{T}(\iota_2) \circ \eta_{\mathsf{T}(B)} \circ \pi_2 \tag{\ref{hneghcirc}} \\
    &=~ \mathsf{T}\left(1_A \oplus \mathsf{T}(0) \right) \circ  \mathsf{T}(\iota_1) \circ \pi_1  - \mathsf{T}\left(1_A \oplus \mathsf{T}(0) \right) \circ \mathsf{T}(\iota_2) \circ \eta_{\mathsf{T}(B)} \circ \mathsf{T}(0) \circ \pi_1 + \mathsf{T}(\iota_2) \circ \eta_{\mathsf{T}(B)} \circ \pi_2 \\
    &=~ \mathsf{T}(\iota_1) \circ \pi_1  -  \mathsf{T}(\iota_2) \circ \eta_{\mathsf{T}(B)} \circ \mathsf{T}(0) \circ \mathsf{T}(0) \circ \pi_1 + \mathsf{T}(\iota_2) \circ \eta_{\mathsf{T}(B)} \circ \pi_2  \tag{Nat. of $\iota_1$, $\iota_2$, and $\eta$} \\
    &=~  \mathsf{T}(\iota_1) \circ \pi_1  -  \mathsf{T}(\iota_2) \circ \eta_{\mathsf{T}(B)} \circ \mathsf{T}(0) \circ \pi_1 + \mathsf{T}(\iota_2) \circ \eta_{\mathsf{T}(B)} \circ \pi_2 \tag{$0 \circ 0 = 0$}
\end{align*}
So $\mathsf{h}^{-1}_{A,B} = \mathsf{T}(\iota_1) \circ \pi_1  -  \mathsf{T}(\iota_2) \circ \eta_{\mathsf{T}(B)} \circ \mathsf{T}(0) \circ \pi_1 + \mathsf{T}(\iota_2) \circ \eta_{\mathsf{T}(B)} \circ \pi_2$ as desired. 
\end{proof}

Observe that for any monad $(\mathsf{T}, \mu, \eta)$ on a category with biproducts that also has negatives, one can always define the map $\mathsf{T}(\iota_1) - \mathsf{T}(\iota_2) \circ \eta_{\mathsf{T}(\mathsf{0})} \circ \mathsf{T}(0)$. However, while said map will satisfy \textbf{[FI.1]} and \textbf{[FI.2]}, it may not satisfy \textbf{[FI.3]}. So in this setting, checking if a monad is a Hopf monad, one needs only check that said map satisfies \textbf{[FI.3]}. It turns out that the equality in question can be simplified even further. Therefore, in the presence of negatives, checking that a monad is a Hopf monad amounts to showing that one equality holds.  

\begin{proposition} Let $(\mathsf{T}, \mu, \eta)$ be a monad on a category $\mathbb{X}$ with finite biproducts that also has negatives. Then $(\mathsf{T}, \mu, \eta)$ is a Hopf monad if and only if for all pairs of objects $A, B \in \mathbb{X}$ the following equality holds:
\begin{align} \label{hopfneg}
    \mathsf{T}\left( 1_A \oplus 0 \right) + \mathsf{T}(\iota_2) \circ \eta_{\mathsf{T}(B)} \circ \mu_B \circ \mathsf{T}(\pi_2) - \mathsf{T}(\iota_2) \circ \eta_{\mathsf{T}(B)} \circ \mathsf{T}(0) = 1_{\mathsf{T}\left(A \oplus \mathsf{T}(B) \right) }
\end{align}
\end{proposition}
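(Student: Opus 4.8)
The plan is to run the same argument that proved the idempotent analogue, using Lemma \ref{hcircneg} in place of Lemma \ref{hcidemp}: first pin down the shape of the fusion invertor forced by the hypotheses, and then observe that substituting that shape into axiom \textbf{[FI.3]} is, after elementary rewriting, exactly equation (\ref{hopfneg}).

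For the forward implication I would assume $(\mathsf{T},\mu,\eta)$ is a Hopf monad and invoke Lemma \ref{hcircneg}, which gives $\mathsf{h}^\circ_A = \mathsf{T}(\iota_1) - \mathsf{T}(\iota_2)\circ\eta_{\mathsf{T}(\mathsf{0})}\circ\mathsf{T}(0)$. I would then feed this into \textbf{[FI.3]} and simplify the summand $\mathsf{T}(1_A\oplus\mathsf{T}(0))\circ\mathsf{h}^\circ_A\circ\mathsf{T}(\pi_1)$. Using naturality of $\iota_1,\iota_2,\eta$, the biproduct identities $\iota_1\circ\pi_1 = 1_A\oplus 0$ and $\pi_2\circ\iota_1 = 0$, and functoriality of $\mathsf{T}$ applied to composites of zero maps (so that $\mathsf{T}(0_{\mathsf{0}\to B})\circ\mathsf{T}(0_{A\to\mathsf{0}})\circ\mathsf{T}(\pi_1) = \mathsf{T}(0_{A\oplus\mathsf{T}(B)\to B})$), this summand collapses to $\mathsf{T}(1_A\oplus 0) - \mathsf{T}(\iota_2)\circ\eta_{\mathsf{T}(B)}\circ\mathsf{T}(0)$. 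Adding back the second summand $\mathsf{T}(\iota_2)\circ\eta_{\mathsf{T}(B)}\circ\mu_B\circ\mathsf{T}(\pi_2)$ of \textbf{[FI.3]} yields precisely (\ref{hopfneg}).

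For the converse I would take (\ref{hopfneg}) as hypothesis, \emph{define} the natural transformation $\mathsf{h}^\circ_A := \mathsf{T}(\iota_1) - \mathsf{T}(\iota_2)\circ\eta_{\mathsf{T}(\mathsf{0})}\circ\mathsf{T}(0)$ (which makes sense for any monad in this setting), and check \textbf{[FI.1]}, \textbf{[FI.2]}, \textbf{[FI.3]}. Axiom \textbf{[FI.1]} follows from $\pi_1\circ\iota_1 = 1_A$ together with the correction term vanishing, since $\pi_1\circ\iota_2$ is a zero map and naturality of $\eta$ turns the correction term into $\eta_A$ precomposed with a zero map. Axiom \textbf{[FI.2]} is the step where the extra term matters: writing $\pi_2\circ\iota_1 = 0\colon A\to\mathsf{T}(\mathsf{0})$ as $\eta_{\mathsf{0}}\circ 0_{A\to\mathsf{0}}$ (legitimate because $\mathsf{0}$ is a zero object, so $\eta_{\mathsf{0}}$ is the zero map) and using the unit law $\mu_{\mathsf{0}}\circ\mathsf{T}(\eta_{\mathsf{0}}) = 1$, one gets $\mu_\mathsf{0}\circ\mathsf{T}(\pi_2)\circ\mathsf{T}(\iota_1) = \mathsf{T}(0_{A\to\mathsf{0}})$, while the correction term, via $\pi_2\circ\iota_2 = 1$ and $\mu\circ\eta = 1$, contributes the same $\mathsf{T}(0_{A\to\mathsf{0}})$; the two cancel, so \textbf{[FI.2]} holds (notably without assuming $\mathsf{T}$ preserves zero maps). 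Axiom \textbf{[FI.3]} is, after the same simplification of the first summand as in the forward direction, literally (\ref{hopfneg}). With all three axioms verified, $\mathsf{h}^\circ$ is a fusion invertor, so Proposition \ref{mainprop} gives that $(\mathsf{T},\mu,\eta)$ is a Hopf monad.

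There is no genuine obstacle here: the identity (\ref{hopfneg}) has been reverse-engineered from \textbf{[FI.3]} so that the computation goes through. The only point requiring care is the bookkeeping — keeping track of which of the several maps written ``$\mathsf{T}(0)$'' is $\mathsf{T}$ applied to which zero map ($A\to\mathsf{0}$, $\mathsf{0}\to B$, $A\oplus\mathsf{T}(B)\to B$, etc.) and re-expressing one as another through naturality of $\eta$ and functoriality of $\mathsf{T}$.
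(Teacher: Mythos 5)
Your proposal is correct and follows essentially the same route as the paper's proof: both directions substitute the forced form $\mathsf{h}^\circ_A = \mathsf{T}(\iota_1) - \mathsf{T}(\iota_2)\circ\eta_{\mathsf{T}(\mathsf{0})}\circ\mathsf{T}(0)$ from Lemma \ref{hcircneg} into \textbf{[FI.3]}, and the converse verifies \textbf{[FI.1]}--\textbf{[FI.3]} for that candidate exactly as you describe, including the cancellation trick in \textbf{[FI.2]} that avoids assuming $\mathsf{T}$ preserves zero maps. Nothing further is needed.
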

\begin{proof} Suppose that $(\mathsf{T}, \mu, \eta)$ is a Hopf monad. Using \textbf{[FI.3]}, we compute: 
\begin{align*}
&1_{\mathsf{T}\left( A \oplus \mathsf{T}(B) \right)} =~ \mathsf{T}\left(1_A \oplus \mathsf{T}(0) \right) \circ \mathsf{h}^\circ_A \circ \mathsf{T}(\pi_1) + \mathsf{T}(\iota_2) \circ \eta_{\mathsf{T}(B)} \circ \mu_B \circ \mathsf{T}(\pi_2) \\
&=~\mathsf{T}\left(1_A \oplus \mathsf{T}(0) \right) \circ \left(  \mathsf{T}(\iota_1) - \mathsf{T}(\iota_2) \circ \eta_{\mathsf{T}(B)} \circ \mathsf{T}(0) \right) \circ \mathsf{T}(\pi_1) + \mathsf{T}(\iota_2) \circ \eta_{\mathsf{T}(B)} \circ \mu_B \circ \mathsf{T}(\pi_2) \tag{\ref{hneghcirc}} \\
&=~ \mathsf{T}\left(1_A \oplus \mathsf{T}(0) \right) \circ \mathsf{T}(\iota_1) \circ \mathsf{T}(\pi_1) - \mathsf{T}\left(1_A \oplus \mathsf{T}(0) \right) \circ \mathsf{T}(\iota_2) \circ \eta_{\mathsf{T}(B)} \circ \mathsf{T}(0) \circ \mathsf{T}(\pi_1) + \mathsf{T}(\iota_2) \circ \eta_{\mathsf{T}(B)} \circ \mu_B \circ \mathsf{T}(\pi_2) \\
&=~ \mathsf{T}(\iota_1) \circ \mathsf{T}(\pi_1) -  \mathsf{T}(\iota_2) \circ \eta_{\mathsf{T}(B)} \circ \mathsf{T}(\pi_1) + \mathsf{T}(\iota_2) \circ \eta_{\mathsf{T}(B)} \circ \mu_B \circ \mathsf{T}(\pi_2) \tag{Nat. of $\iota_1$, $\iota_2$, and $\eta$, and $0 \circ 0 = 0$} \\
&=~ \mathsf{T}\left( 1_A \oplus 0 \right) - \mathsf{T}(\iota_2) \circ \eta_{\mathsf{T}(B)} \circ \mathsf{T}(0) + \mathsf{T}(\iota_2) \circ \eta_{\mathsf{T}(B)} \circ \mu_B \circ \mathsf{T}(\pi_2) \tag{$\iota_\mathsf{0} \circ \pi_\mathsf{0} = 1_A \oplus 0$} 
\end{align*}
So the desired equality (\ref{hopfneg}) holds. 

Conversely, suppose that (\ref{hopfneg}) holds. Define the natural transformation $\mathsf{h}^\circ_A: \mathsf{T}(A) \to \mathsf{T}\left( A \oplus \mathsf{T}(B) \right)$ as follows: 
\begin{align}
    \mathsf{h}^\circ_A = \mathsf{T}(\iota_1) - \mathsf{T}(\iota_2) \circ \eta_{\mathsf{T}(B)} \circ \mathsf{T}(0) 
\end{align}
We will now show that $\mathsf{h}^\circ_A$ satisfies the three identities \textbf{[FI.1]}, \textbf{[FI.2]}, and \textbf{[FI.3]}. So we compute: 
    \begin{enumerate}[{\em (i)}] \item $\mathsf{T}(\pi_1) \circ \mathsf{h}^\circ_A = 1_{\mathsf{T}(A)}$
\begin{align*}
\mathsf{T}(\pi_1) \circ \mathsf{h}^\circ_A &=~ \mathsf{T}(\pi_1) \circ \left( \mathsf{T}(\iota_1) - \mathsf{T}(\iota_2) \circ \eta_{\mathsf{T}(B)} \circ \mathsf{T}(0) \right) \tag{Def. of $\mathsf{h}^\circ$} \\
&=~ \mathsf{T}(\pi_1) \circ \mathsf{T}(\iota_1) - \mathsf{T}(\pi_1) \circ \mathsf{T}(\iota_2) \circ \eta_{\mathsf{T}(B)} \circ \mathsf{T}(0) \\
&=~ 1_{\mathsf{T}(A)} - \mathsf{T}(0) \circ \eta_{\mathsf{T}(B)} \circ \mathsf{T}(0) \tag{$\pi_1 \circ \iota_1 = 1$ and $\pi_1 \circ \iota_2 =0$} \\
&=~ 1_{\mathsf{T}(A)} - \eta_{A} \circ 0 \circ \mathsf{T}(0) \tag{Nat. of $\eta$} \\
&=~ 1_{\mathsf{T}(A)} - 0 \\
&=~  1_{\mathsf{T}(A)} 
\end{align*}
\item $\mu_B \circ \mathsf{T}(\pi_2) \circ \mathsf{h}^\circ_A = 0$
\begin{align*}
    \mu_B \circ \mathsf{T}(\pi_2) \circ \mathsf{h}^\circ_A &=~ \mu_B \circ \mathsf{T}(\pi_2) \circ \left( \mathsf{T}(\iota_1) - \mathsf{T}(\iota_2) \circ \eta_{\mathsf{T}(B)} \circ \mathsf{T}(0) \right) \tag{Def. of $\mathsf{h}^\circ$} \\
    &=~ \mu_B \circ \mathsf{T}(\pi_2) \circ \mathsf{T}(\iota_1) - \mu_B \circ \mathsf{T}(\pi_2) \circ \mathsf{T}(\iota_2) \circ \eta_{\mathsf{T}(B)} \circ \mathsf{T}(0) \tag{$\pi_2 \circ \iota_1 = 0$ and $\pi_2 \circ \iota_2 =1$} \\ 
    &=~ \mu_B \circ \mathsf{T}(0) - \mu_B \circ \eta_{\mathsf{T}(B)} \circ \mathsf{T}(0) \\
    &=~ \mu_B \circ \mathsf{T}\left( \eta_B \circ 0 \right) - \mathsf{T}(0) \tag{$\mu \circ \eta =1$} \\
    &=~ \mu_B \circ \mathsf{T}\left( \eta_B \right)   \circ \mathsf{T}(0) - \mathsf{T}(0) \\
    &=~ \mathsf{T}(0) - \mathsf{T}(0) \tag{$\mu \circ \mathsf{T}(\eta) = 1$}\\
    &=~ 0 
\end{align*}
\item $\mathsf{T}\left(1_A \oplus \mathsf{T}(0) \right) \circ \mathsf{h}^\circ_A \circ \mathsf{T}(\pi_1) + \mathsf{T}(\iota_2) \circ \eta_{\mathsf{T}(B)} \circ \mu_B \circ \mathsf{T}(\pi_2) =  1_{\mathsf{T}\left( A \oplus \mathsf{T}(B) \right)}$
\begin{align*}
&\mathsf{T}\left(1_A \oplus \mathsf{T}(0) \right) \circ \mathsf{h}^\circ_A \circ \mathsf{T}(\pi_1) + \mathsf{T}(\iota_2) \circ \eta_{\mathsf{T}(B)} \circ \mu_B \circ \mathsf{T}(\pi_2) \\
&=~  \mathsf{T}\left(1_A \oplus \mathsf{T}(0) \right)\circ \left( \mathsf{T}(\iota_1) - \mathsf{T}(\iota_2) \circ \eta_{\mathsf{T}(B)} \circ \mathsf{T}(0) \right)  \circ \mathsf{T}(\pi_1) + \mathsf{T}(\iota_2) \circ \eta_{\mathsf{T}(B)} \circ \mu_B \circ \mathsf{T}(\pi_2) \tag{Def. of $\mathsf{h}^\circ$} \\
&=~ \mathsf{T}\left(1_A \oplus \mathsf{T}(0) \right) \circ \mathsf{T}(\iota_1) \circ \mathsf{T}(\pi_1) - \mathsf{T}\left(1_A \oplus \mathsf{T}(0) \right)\circ  \mathsf{T}(\iota_2) \circ \eta_{\mathsf{T}(B)} \circ \mathsf{T}(0)  \circ \mathsf{T}(\pi_1) + \mathsf{T}(\iota_2) \circ \eta_{\mathsf{T}(B)} \circ \mu_B \circ \mathsf{T}(\pi_2) \\
&=~ \mathsf{T}(\iota_1) \circ \mathsf{T}(\pi_1) - \mathsf{T}(\iota_2) \circ \eta_{\mathsf{T}(B)} \circ \mathsf{T}(0) + \mathsf{T}(\iota_2) \circ \eta_{\mathsf{T}(B)} \circ \mu_B \circ \mathsf{T}(\pi_2) \tag{Nat. of $\iota_1$, $\iota_2$, and $\eta$, and $0 \circ 0 = 0$} \\
&=~ \mathsf{T}\left( 1_A \oplus 0 \right) - \mathsf{T}(\iota_2) \circ \eta_{\mathsf{T}(B)} \circ \mathsf{T}(0) + \mathsf{T}(\iota_2) \circ \eta_{\mathsf{T}(B)} \circ \mu_B \circ \mathsf{T}(\pi_2) \tag{$\iota_1 \circ \pi_1 = 1 \oplus 0$} \\
&=~ 1_{\mathsf{T}\left( A \oplus \mathsf{T}(B) \right)} \tag{\ref{hopfneg}}
\end{align*}
\end{enumerate}
Therefore, $\mathsf{h}^\circ$ is a fusion invertor for $(\mathsf{T}, \mu, \eta)$. Then by Proposition \ref{mainprop}, $(\mathsf{T}, \mu, \eta)$ is a Hopf monad. 
\end{proof}

\bibliographystyle{plain}      
\bibliography{tracereferences}   

\end{document}